\documentclass[11pt]{article}
\usepackage{fullpage}
\usepackage{amsfonts}
\usepackage{bbding}
\usepackage{graphicx,float}
\usepackage{amsfonts,amssymb,amsmath,amsthm,amscd}
\usepackage{mathrsfs}
\usepackage{xcolor}
\usepackage{empheq}
\usepackage{adforn}
\usepackage{cancel}
\usepackage{xr-hyper}
\usepackage{xr}
\usepackage{mdframed}
\usepackage{mathdots}
\usepackage{enumerate}
\usepackage{lmodern}
\usepackage{mdframed}
\usepackage{stmaryrd}
\usepackage{blindtext}
\usepackage[all, knot]{xy}
\usepackage{leftidx}
\usepackage{accents}
\usepackage{appendix}
\usepackage{cases}
\usepackage{bbm}
\usepackage{subfigure}

\definecolor{slightblue}{rgb}{.8, .8, 1}
\definecolor{hair}{RGB}{100,225,190}
\definecolor{ruby}{RGB}{220,50,120}
\definecolor{grass}{RGB}{150,220,110}
\definecolor{ceruleanblue}{rgb}{0.16, 0.32, 0.75}
\definecolor{deepcarmine}{rgb}{0.66, 0.13, 0.24}
\definecolor{otterbrown}{rgb}{0.4, 0.26, 0.13}
\definecolor{sapphire}{rgb}{0.03, 0.15, 0.4}

\usepackage[colorlinks=true,  linkcolor=otterbrown, citecolor=sapphire,  urlcolor=hair!80!black]{hyperref}

\usepackage[T1]{fontenc}

\newtheorem{theorem}{Theorem}[section] 
\newtheorem{lemma}[theorem]{Lemma}

\theoremstyle{definition}

\newtheorem{remark}[theorem]{Remark} \numberwithin{equation}{section}
\numberwithin{figure}{section}

\newcommand{\one}{\mathbf{1}}

\newcommand{\Cb}{\mathbb{C}}

\newcommand{\Eb}{\mathbb{E}}

\newcommand{\Pb}{\mathbb{P}}

\newcommand{\Rb}{\mathbb{R}}

\newcommand{\Zb}{\mathbb{Z}}

\newcommand{\Ac}{\mathcal{A}}

\newcommand{\Fc}{\mathcal{F}}

\newcommand{\Lc}{\mathcal{L}}

\usepackage[section]{placeins}
\usepackage{wrapfig}
\usepackage{lpic}

%\sidecaptionvpos{figure}{c}

\newcommand{\wt}{\widetilde}

%%%% Wei's macro

\newcommand{\rwl}{\mu^{\rm drw}}
\newcommand{\br}{\mu^{\rm br}}
%%% End of Wei's macro

\title{Coupling Brownian loop soups and random walk loop soups\\ at all polynomial scales}
\author{Wei Qian \thanks{The University of Hong Kong}}
\date{}
\setcounter{tocdepth}{2}

\begin{document}
	\maketitle
	
	\begin{abstract}
Lawler and Trujillo Ferreras \cite{LTF2007} constructed a well-known coupling between the Brownian loop soups on $\Rb^2$ and the (discrete-time) random walk loop soups on $\Zb^2$ (one rescales the random walk loops by $1/N$, their time parametrizations by $1/(2N^2)$, and lets $N\to \infty$), which led to numerous applications. It nevertheless only holds for loops with time length at least $N^{\theta-2}$ for $\theta \in(2/3,2)$. 
In particular, there is no control on mesoscopic loops with time length less than $N^{-4/3}$ (i.e.\ roughly diameter less than $N^{-2/3}$). 
This coupling was subsequently extended in \cite{MR3877544} to $\Zb^d$ with $d\ge 3$, for loops with time length at least $N^{\theta-2}$, for $\theta \in(2d/(d+4),2)$.

In this paper, we find a simple way to remove the restriction $\theta>2d/(d+4)$, so that such a coupling works for all $\theta\in (0,2)$, i.e.\ for loops at all polynomial scales. 
We establish couplings for both discrete-time and continuous-time random walk loop soups on $\Zb^d$, for $d\ge 1$. As an intermediate step, we also establish a KMT coupling between the continuous-time random walk bridge on $\Zb^d$ and the Brownian bridge on $\Rb^d$.

	\end{abstract}
	
	%\tableofcontents
	
\section{Introduction}

The Brownian loop soup, introduced by Lawler and Werner \cite{MR2045953}, plays a prominent role in random geometry. The random walk loop soup, introduced by Lawler and Trujillo Ferreras \cite{LTF2007}, is the discrete analogue of the Brownian loop soup. 
It was proved in \cite{LTF2007} that the random walk loop soup on the square lattice $\Zb^2$ converges to the Brownian loop soup on $\Rb^2$ in the scaling limit. 
Moreover, a precise coupling between the two loop soups was constructed, as we  state below.

\subsection{Lawler and Trujillo Ferreras's coupling}\label{sec:ltf}
Let $\Ac_\lambda$ and $\wt \Ac_\lambda$ be respectively a realization of the Brownian loop soup on $\Rb^2$ and random walk loop soup on $\Zb^2$ with intensity $\lambda$ (see Section~\ref{sec:prelim} for precise definitions). Throughout this paper, we regard both the Brownian loops and random walk loops as rooted loops. We also consider the random walk loops as curves by linear interpolation. 

For each positive integer $N$, we define $\Ac_{\lambda, N}$ to be the collection of loops obtained from $\Ac_\lambda$ by scaling space by $1/N$ and time by $1/N^2$. More precisely, for each loop $\gamma$ with time length $t_\gamma$, let $\Phi_N\gamma$ be the loop with 
\begin{equation}\label{eq:phin}
t_{\Phi_N \gamma} =t_\gamma/N^2, \qquad \Phi_N\gamma(t) =N^{-1} \gamma(t N^2), \quad 0\le t \le t_\gamma/N^2.
\end{equation}
We then set
\begin{align}\label{eq:ac}
\Ac_{\lambda, N} =\{\Phi_N\gamma : \gamma \in \Ac_\lambda\}.
\end{align}
The scaling invariance of the Brownian loop soup implies that $\Ac_{\lambda, N}$ is also distributed as a Brownian loop soup with intensity $\lambda$.

For the random walk loop soup, we define $\wt\Phi_N \gamma$ to be the loop with 
\begin{align}\label{eq:phi2d}
t_{\wt\Phi_N \gamma} =t_\gamma/(2 N^2), \qquad \wt\Phi_N\gamma(t) =N^{-1} \gamma(t 2 N^2), \quad 0\le t \le t_\gamma/ (2N^2),
\end{align}
and let
\begin{align}\label{eq:a2d}
\wt \Ac_{\lambda, N} =\{\wt \Phi_N\gamma : \gamma \in \wt \Ac_\lambda\}.
\end{align}
As pointed out in \cite{LTF2007}, the extra factor $2$ in the time scaling for the random walk loops comes from the fact that the covariance of the simple two-dimensional random walk in $2n$ steps is $n I$ as opposed to $2n I$ for a Brownian motion at time $2n$.

To state the coupling by Lawler and Trujillo Ferreras, we also need to define the following auxiliary functions. 
For $t\ge (5/8) N^{-2}$ and integer $k\ge 1$, let
\begin{align*}
\varphi_N(t) = \frac{k}{N^2} \quad \text{if} \quad \frac{k-3/8}{N^2} \le t < \frac{k+5/8}{N^2}.
\end{align*}
For $z\in \Cb$ and $z_0 \in \Zb^2 \subset \Cb$, define
\begin{align}\label{eq:psi}
\psi_N(z) =\frac{z_0}{N} \quad \text{if} \quad \max\{|\Re\{Nz -z_0\}|, |\Im\{Nz-z_0\}|\} <1/2.
\end{align}
The definition of $\psi_N(z)$ when $Nz$ is exactly on an edge of the dual lattice of $\Zb^2$  is irrelevant for the following theorem.
\begin{theorem}[Theorem 1.1, \cite{LTF2007}]\label{thm0}
One can define $\{\Ac_\lambda\}_{\lambda>0}$ and $\{\wt \Ac_\lambda\}_{\lambda>0}$ on the same probability space so that 
both $\Ac_\lambda$ and $\wt\Ac_\lambda$ are increasing in $\lambda$.
Moreover, for $\Ac_{\lambda, N}$ and $\wt \Ac_{\lambda,N}$ defined as above, there exists $c>0$ such that for every $r\ge 1$, $\lambda>0$, integer $N\ge 1$ and every $\theta \in (2/3, 2)$, except perhaps on an event of probability at most $c(\lambda +1) r^2 N^{2-3\theta}$, there is a one-to-one correspondence between the following two sets of loops
\begin{align*}
\{\wt \gamma\in \wt \Ac_{\lambda,N} : t_{\wt \gamma} > N^{\theta-2}, |\wt \gamma(0)|<r\}, \quad \{\gamma\in \Ac_{\lambda,N} : \varphi_N(t_{ \gamma}) > N^{\theta-2}, |\psi_N( \gamma(0))|<r\}.
\end{align*}
If $\wt\gamma\in \wt\Ac_{\lambda, N}$ and $\gamma\in\Ac_{\lambda,N}$ are paired in this correspondence, then
\begin{align}
\notag
&|t_\gamma - t_{\wt \gamma}| \le 5/8 N^{-2}, \qquad\\
\label{eq:ltf}
\sup_{0\le s \le 1}& |\gamma(s t_\gamma) -\wt\gamma(s t_{\wt \gamma})| \le c N^{-1} \log N.
\end{align}
\end{theorem}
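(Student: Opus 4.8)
The plan is to represent both soups as Poisson point processes and couple them box by box. Recall that $\Ac_{\lambda,N}$ is a Poisson point process with intensity $\lambda$ times the rooted Brownian loop measure $\mu^{\mathrm{br}}$, while $\wt\Ac_{\lambda,N}$ is a Poisson point process with intensity $\lambda$ times the image under $\wt\Phi_N$ of the rooted random walk loop measure $\mu^{\mathrm{rw}}$. For $z_0\in\Zb^2$ and an integer $k\ge1$, let $Q^{\mathrm{rw}}_{z_0,k}$ be the set of rescaled random walk loops rooted at $z_0/N$ with time length $k/N^2$ (equivalently, the rescalings of the $2k$-step random walk loops based at $z_0$), and let $Q^{\mathrm{br}}_{z_0,k}$ be the set of Brownian loops with $\psi_N(\gamma(0))=z_0/N$ and $\varphi_N(t_\gamma)=k/N^2$, i.e.\ those whose root lies in the $1/N$-square around $z_0/N$ and whose time length lies in $[(k-3/8)N^{-2},(k+5/8)N^{-2})$. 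The sets $Q^{\mathrm{rw}}_{z_0,k}$ are pairwise disjoint, as are the $Q^{\mathrm{br}}_{z_0,k}$, and the restriction of a Poisson point process to each is again an independent Poisson point process; I will couple the restrictions to $Q^{\mathrm{rw}}_{z_0,k}$ and $Q^{\mathrm{br}}_{z_0,k}$ separately for each $(z_0,k)$. Once these two restrictions have the same number of loops one pairs them arbitrarily, and the union over all admissible $(z_0,k)$ is the desired one-to-one correspondence; the bound $|t_\gamma-t_{\wt\gamma}|\le5/8\,N^{-2}$ is then automatic, since $t_{\wt\gamma}=k/N^2$ while $t_\gamma\in[(k-3/8)N^{-2},(k+5/8)N^{-2})$. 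Loops with root at distance $\ge r$ from the origin, or with $\varphi_N(t_\gamma)\le N^{\theta-2}$ (resp.\ $t_{\wt\gamma}\le N^{\theta-2}$), i.e.\ $k\le N^{\theta}$, are discarded, so the admissible boxes are those with $|z_0|<rN$ and $k>N^{\theta}$; there are $O(r^2N^2)$ admissible values of $z_0$.

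Step 1 (matching the box masses). Write $a_k:=\mu^{\mathrm{br}}(Q^{\mathrm{br}}_{z_0,k})$. Since $\mu^{\mathrm{br}}$ has density $\tfrac{1}{2\pi t^2}\,dt$ in the time length, per unit area of root, one gets $a_k=\tfrac{1}{2\pi}\big(\tfrac{1}{k-3/8}-\tfrac{1}{k+5/8}\big)=\tfrac{1}{2\pi}\big((k-3/8)(k+5/8)\big)^{-1}$, independent of $z_0$ and $N$. Write $b_k:=\mu^{\mathrm{rw}}(Q^{\mathrm{rw}}_{z_0,k})$; by the definition of the random walk loop measure this equals $\tfrac{1}{2k}\,p_{2k}(0,0)$, where $p_{2k}(0,0)=\binom{2k}{k}^2 16^{-k}$ is the $2k$-step return probability of simple random walk on $\Zb^2$. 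From the second-order local limit expansion $p_{2k}(0,0)=\tfrac{1}{\pi k}\big(1-\tfrac{1}{4k}+O(k^{-2})\big)$ one sees that the offset in the window (equivalently, centering it at $(k+\tfrac18)N^{-2}$ rather than $kN^{-2}$) is precisely the choice for which the $k^{-3}$ terms of $a_k$ and $b_k$ cancel, leaving $|a_k-b_k|=O(k^{-4})$ for $k$ large (any finitely many small $k$ being absorbed into the constant). Hence $\sum_{k>N^{\theta}}|a_k-b_k|=O(N^{-3\theta})$, so $\sum_{z_0,k}|a_k-b_k|=O(r^2N^{2-3\theta})$, the sum running over admissible $(z_0,k)$.

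Step 2 (coupling inside a box). For each admissible $(z_0,k)$ the number of loops in $Q^{\mathrm{br}}_{z_0,k}$ is Poisson$(\lambda a_k)$ and the number in $Q^{\mathrm{rw}}_{z_0,k}$ is Poisson$(\lambda b_k)$; couple these two counts through a common Poisson$(\lambda(a_k\wedge b_k))$ part so that they agree except with probability at most $\lambda|a_k-b_k|$. Summed over admissible $(z_0,k)$ this contributes $O(\lambda r^2N^{2-3\theta})$ to the exceptional event. On the agreement event, with common value $m$, the $m$ loops of each kind are i.i.d.\ from the respective normalized restricted intensities; pair them up, and couple each pair as follows. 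A $Q^{\mathrm{rw}}$-sample is a uniform $2k$-step simple random walk bridge from $z_0$ to $z_0$; a $Q^{\mathrm{br}}$-sample is a Brownian bridge from a root $z$ (uniform in the $1/N$-square about $z_0/N$) to itself, of a duration $t_\gamma$ with $t_\gamma N^2\in[k-3/8,k+5/8)$. Unrescaling and reading off matched time fractions, the goal becomes to couple a $2k$-step random walk bridge from $z_0$ to $z_0$ with a Brownian bridge of duration $k$ from $z$ to $z$ so that they stay within $c\log k$ — the factor $2$ in $\wt\Phi_N$ is exactly what aligns their covariances (both $kI$). I obtain this from the Komlós--Major--Tusnády strong approximation: couple an unconditioned planar walk $S$ and an unconditioned planar Brownian motion $B$ so that $\sup_{0\le j\le 2k}|S_j-B_{j/2}|\le c\log k$ outside probability $O(k^{-M})$ for any prescribed $M$, then condition on the polynomially likely events $\{S_{2k}=z_0\}$ and $\{B_k=z\}$ (or invoke a bridge-valued version directly), which costs only a polynomial factor in the failure probability. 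The $O(N^{-1})$ gap between $z$ and $z_0$ is absorbed by coupling the centered bridges and translating (an extra $|z-z_0|<1\le\log k$), and the $O(N^{-2})$ gap between $t_\gamma$ and $k/N^2$ by Brownian scaling of the bridge, at cost $O(k^{-1/2}\log k)$; dividing by $N$ and using $\log k\le 100\log N$ for $k\le N^{100}$ yields $\sup_{0\le s\le1}|\gamma(st_\gamma)-\wt\gamma(st_{\wt\gamma})|\le cN^{-1}\log N$ for such a pair. By Markov's inequality the probability that some admissible matched loop with $k\le N^{100}$ fails this estimate is at most $\lambda\sum_{z_0,k}a_k\,k^{-10}=O(\lambda r^2N^{2-11\theta})$, and the probability that some admissible loop has $k>N^{100}$ is $O(\lambda r^2N^{-98})$; both are negligible next to $N^{2-3\theta}$. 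Adding the three contributions gives an exceptional probability $O((\lambda+1)r^2N^{2-3\theta})$.

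Finally, monotonicity in $\lambda$ is built in by running the whole construction on Poisson point processes on $(\text{loop space})\times[0,\infty)$ and letting $\Ac_\lambda$, resp.\ $\wt\Ac_\lambda$, consist of the loops with last coordinate $\le\lambda$; carrying out the box-by-box coupling at this enlarged level makes it simultaneously valid for all $\lambda$. I expect the main obstacle to be Step 1 — forcing the mass defect down to the summable order $k^{-4}$, which is exactly what dictates the asymmetric window $[-3/8,5/8)$ and requires the second-order term of the local limit theorem — with a secondary technical difficulty in Step 2, namely transferring the KMT approximation from walk and motion to their bridges uniformly in the duration $k$ (keeping the conditioning cost polynomial) and organizing the union bound over the family of admissible loops, whose number grows with $N$.
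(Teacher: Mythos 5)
Your argument is essentially a correct reconstruction of Lawler--Trujillo Ferreras's original proof, which the paper cites but does not re-prove; the box decomposition, the Poisson-count coupling with total-variation cost $\lambda|a_k - b_k|$, the $[-3/8, 5/8)$ window tuned so the $k^{-3}$ terms of the Brownian and random-walk box masses cancel, and the KMT coupling for bridges all match the LTF blueprint. One step you should not wave away: conditioning an unconditioned KMT coupling of $(S,B)$ on $\{S_{2k}=z_0\}\cap\{B_k=z\}$ is not a mere ``polynomial cost'' argument, since $\{B_k=z\}$ is a null event; you do need the bridge-valued KMT theorem (Theorem 6.4 of LTF, quoted here as Theorem~\ref{thm:kmt1} and Lemma~\ref{lem:kmt}), which you acknowledge as the alternative, so this is a known gap you flag rather than a hidden one.

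The instructive contrast is with the construction the paper actually carries out in Section~3 for Theorems~\ref{thm1} and~\ref{thm2}. Your Step~1 -- choosing the fixed window $[(k-3/8)N^{-2},(k+5/8)N^{-2})$ so that $|a_k-b_k|=O(k^{-4})$ -- is exactly the step that locks in the restriction $\theta>2/3$: the residual mass defect, summed over $O(r^2N^2)$ boxes, gives the $r^2N^{2-3\theta}$ error and cannot be driven below $N^{-2/3}$ no matter how many terms of the local CLT you expand, because the Brownian density $(2\pi t^2)^{-1}$ and the sequence $\widetilde q_2(k)$ differ by genuinely nonzero polynomial corrections. The paper sidesteps this entirely by abandoning the fixed window and defining the endpoints $a_n$ by the recursion \eqref{eq:an}--\eqref{eq:an12}, which forces $\int_{a_n}^{a_{n+1}}(2\pi)^{-d/2}s^{-d/2-1}\,ds$ to equal the random-walk box mass \emph{exactly}; then the \emph{same} Poisson process $N(n,z;t)$ drives the loop counts in both soups, the count-coupling step (and hence the $N^{2-3\theta}$ term) disappears, and the only error left is the KMT failure probability, which is $O(N^{-a})$ for any $a$. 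Lemma~\ref{lem:key} shows $a_n=2n/d+O(1)$, so the resulting time windows still have bounded width, giving $|t_\gamma-t_{\widetilde\gamma}|\le c(d)N^{-2}$. So: your proof is correct for Theorem~\ref{thm0} and faithful to LTF, but it is exactly the approximate-mass-matching design choice that the paper's Section~3 identifies as the removable obstruction.
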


The coupling by Lawler and Trujillo Ferreras has proved useful in a great number of situations. 
First, it provides the theoretical basis for simulations of Brownian loop soups via random walk loop soups. Perhaps the best-known application of this coupling is the proof of  the convergence of random walk loop soup clusters towards the conformal loop ensemble (CLE): it was crucially employed in \cite{BCL2016} which, together with \cite{Lu2019}, gives the complete proof. 
In many cases, the random walk loop soup has been instrumental for proving results about the Brownian loop soup in the continuum, see e.g.\ \cite{QW2019,ALS2020b,ABJL2023}.
Conversely, CLE arm exponents \cite{GNQ3} were used in \cite{GNQ1} to compute arm exponents of the random walk loop soup, which served as a key input to study percolation of the discrete Gaussian free field (GFF) and the occupation field of the random walk loop soup \cite{GNQ2}. 

Nevertheless, since we can only take $\theta>2/3$ in Theorem~\ref{thm0}, this coupling does not control mesoscopic loops with time length less than $N^{-4/3}$ (which roughly corresponds to diameter less than $N^{-2/3}$).
In practice, it is often necessary to control the mesoscopic loops, since they can potentially play a role in the scaling limit in terms of connectivity.
For instance, the restriction $\theta>2/3$ was problematic for the purpose of A\"id\'ekon, Berestycki, Jego and Lupu \cite{ABJL2023}, when approximating the multiplicative chaos of the Brownian loop soup by that of the random walk loop soup (see Sections 1.5 and 11.3 in \cite{ABJL2023} for more details).
In \cite[Lemma 11.11]{ABJL2023}, they constructed a coupling for mesoscopic loops at all polynomial scales, with the drawback that it only works for \emph{local} loops, i.e.\ the loops whose roots are close to a given point $z$ (roughly, the distance of the root of a loop to $z$ is at most the order of the diameter of that loop).

\subsection{Main result}\label{subsec:main}
It is unclear at first sight whether $\theta>2/3$ can be removed,\footnote{
For example, this was believed to be impossible in \cite[Section 11.3]{ABJL2023}: 
``On the other hand, it is fairly clear from the proof of \cite{LTF2007} that their result is sharp, and that the coupling described above cannot hold without the restriction $\theta>2/3$; that is, at all scales smaller than $N^{-2/3}$ some discrete and continuous loops somewhere will be quite different from one another.''}  since the value $2/3$ comes from a computation involving the following Taylor expansion of the total mass $q_2(n)$ of all random walk loops on $\Zb^2$ with length $2n$ rooted at $0$
\begin{align}\label{eq:qn}
\wt q_2(n)= (2n)^{-1} \left[ 2^{-2n}\binom{2n}{n} \right]^2 = \frac{1}{2\pi n^2} - \frac{1}{8 \pi n^3} + O(n^{-4}).
\end{align}
Somewhat unexpectedly, after a close study of the proof by Lawler and Trujillo Ferreras \cite{LTF2007},  we have found a natural way to get rid of the constraint $\theta>2/3$. 
This shows that the value $2/3$ in fact does not play an inherent role, and the size $N^{-2/3}$ is not an impassable barrier.
Our proof follows the outline of \cite{LTF2007}, by carefully tuning some parameters. Our observation is rather simple, but it seems that it has been overlooked.

Lawler and Trujillo Ferreras established this coupling in dimension $2$, but their proof has been extended to dimensions $d\ge 3$ by Sapozhnikov and Shiraishi in \cite[Theorem 2.2]{MR3877544}, relying on the expansion (see e.g.\ \cite{MR2137045})
\begin{align}\label{eq:qndd}
\wt q_d(n)= 2\bigg(\frac{d}{4\pi n} \bigg)^{d/2} \bigg(1- \frac{d}{8n} +O\big(\frac{1}{n^2}\big) \bigg),
\end{align}
where  $\wt q_d(n)$ is the total mass of all random walk loops on $\Zb^d$ with length $2n$ rooted at $0$.
Similarly, the coupling in \cite{MR3877544} only works for loops with length greater than $N^{\theta-2}$, where 
\begin{equation}\label{eq:theta}
2d/(d+4)< \theta <2,
\end{equation}
on an event with probability at least 
\begin{equation}\label{eq:bad_event}
1- c(\lambda+1) r^d N^{-\min \left(\frac{d}{2},\; \theta \left(\frac{d}{2}+2\right)-d \right)}
\end{equation}
for some $c\in(0,\infty)$.
They also obtained a slightly weaker bound $c N^{-1/4}\log N$ on the supremum distance between the coupled loops, 
instead of $c N^{-1}\log N$ in \eqref{eq:ltf}.
This is because they established a weaker coupling between random walk bridges on $\Zb^d$ and the Brownian bridges (see Lemma~\ref{lem:kmtd}), for $d\ge 3$, compared to
the strong KMT coupling for $d=2$ established in \cite{LTF2007} (see Lemma~\ref{lem:kmt}). This strong coupling, as well as the exact combinatorial formula \eqref{eq:qn}, relies on a trick which is specific to $d=2$: If we consider a simple random walk on $\Zb^2$ rotated by $45$ degrees, then its projections to the vertical and horizontal axes are independent simple random walks. 
\smallskip

In this paper, we consider both the \emph{discrete-time} random walk loop soups (as in \cite{LTF2007, MR3877544}), and the \emph{continuous-time} random walk loop soups on $\Zb^d$, where every walk spends an (independent) exponential time with mean $1$ at each vertex. 
By abuse of notation, we use $\wt \Ac_\lambda$ to denote both versions of random walk loop soups. The actual version we use will be specified in the context.
The two versions of random walk loop soups are the same, if one forgets the time parametrization and remove the single-vertex loops in the continuous-time random walk loop soup (see \cite[Section 9]{MR2677157}).
However, since our coupling takes into account the time-parametrization of the loops, the couplings for the two versions of loop soups need separate proofs, which can mostly be done in parallel (also see Remark~\ref{rmk:time}). 

The continuous-time random walk loop soup is more natural when one considers its occupation-time field, because of a well-known isomorphism theorem relating this field to the GFF \cite{MR2815763}. 
In addition, it is equal to the ``restriction'' of the Brownian loop soup on the metric graph of $\Zb^d$ (i.e.\ we restrict the Brownian loops on the metric graph to stay on a vertex, before jumping to the next vertex). In other words, we can naturally couple these two versions of loop soups to establish a one-to-one correspondence between them, with the paired loops staying within a supremum distance of 1 of each other.

The Brownian loop soup on metric graphs, introduced by Lupu  \cite{MR3502602}, has proved to be a powerful tool for connecting discrete and continuum models. It has since developed into an active area of research, see e.g.\  \cite{MR3502602,lupu2018random, QW2019, ALS2020b, CD2024, drewitz2025critical, CD20242, Werner2025, LW2025}. Despite the extensive literature on this subject, a KMT-type coupling between the continuous-time random walk bridge on $\Zb^d$ (or equivalently the Brownian bridge on the metric graph of $\Zb^d$) and the Brownian bridge on $\Rb^d$ has not been previously proved (the bridges can be seen as rooted loops).
As an important proof ingredient, we establish such a strong KMT coupling  for all $d\ge 1$. 
\begin{theorem}\label{lem:kmt3}
Fix $d\ge 1$. For each $\eta>0$, there exists $c(\eta, d) \in (0,\infty)$ so that the following holds. For every  $t>0$, there exists a probability space $(\Omega, \Fc, \Pb)$ on which are defined a $d$-dimensional Brownian bridge $(B_s, 0\le s\le 1)$ and a $d$-dimensional (continuous-time) random walk bridge $(S_s, 0\le s\le t)$ such that
\begin{align*}
\Pb\left[ \sup_{0\le s\le 1} \left| (t/d)^{-1/2} S_{st} - B_s \right | \ge c(\eta, d) t^{-1/2} \log t \right] \le c(\eta, d) t^{-\eta}.
\end{align*}
\end{theorem}

Another ingredient that we rely on is the following Taylor expansion of the total mass $q_d(t)$ of all continuous-time random walk loops on $\Zb^d$ with time length $t$ rooted at $0$ (see \eqref{eq:qdt_def} and~\eqref{eq:pt0})
\begin{align}\label{eqnd}
q_d(t)= (d/2)^{d/2} \pi^{-d/2} t^{-d/2-1} + O\big(t^{-d/2-2}\big).
\end{align}
Our proof  requires only the first term in the Taylor series, unlike in \cite{LTF2007, MR3877544}.

\smallskip

In order to state our result, we keep the same notations as in Section~\ref{sec:ltf}, with the following obvious extensions. For $d\ge 1$, 
for a simple random walk loop $\gamma$ on $\Zb^d$, let  $\wt\Phi_N \gamma$ be the loop with
\begin{align*}
t_{\wt\Phi_N \gamma} =t_\gamma/(dN^2), \qquad \wt\Phi_N\gamma(t) =N^{-1} \gamma(t  dN^2), \quad 0\le t \le t_\gamma/ (dN^2).
\end{align*}
For $z\in \Rb^d$ and $z_0\in \Zb^d$,  let $\psi_N(z)=z_0/N$, if $z$ is in the unit hypercube centered at $z_0$. The definition of $\psi_N(z)$ when $z$ falls on the boundary of any unit hypercube centered at a vertex of $\Zb^d$ is irrelevant for our theorem.

The main new input in our proof is the introduction of a positive increasing sequence $\{a_n\}_{n\ge1}$, whose precise definition is given in \eqref{eq:an}. We also show (in Lemma~\ref{lem:key}) that there exists a constant $c(d)>0$, such that 
\begin{align}\label{eq:c'}
|a_n - 2n/d | \le c(d) .
\end{align}
For $t\ge a_1 N^{-2}$ and integer $k\ge 1$, let
\begin{align}\label{eq:chi1}
\chi_N(t) =k \quad \text{if} \quad \frac{a_k}{N^2} \le t < \frac{a_{k+1}}{N^2}.
\end{align}

\begin{theorem}\label{thm1}
Fix $d\ge 1$. Let $\Ac_\lambda$ be a Brownian loop soup on $\Rb^d$, and $\wt\Ac_\lambda$ be a (continuous-time) random walk loop soup on $\Zb^d$.
One can define $\{\Ac_\lambda\}_{\lambda>0}$ and $\{\wt \Ac_\lambda\}_{\lambda>0}$ on the same probability space so that 
both $\Ac_\lambda$ and $\wt\Ac_\lambda$ are increasing in $\lambda$.

Moreover,   
for every $a>0$ and every $\theta \in (0,2)$, there exists $c>0$ such that for every $r\ge 1, \lambda>0$ and integer $N\ge 1$, except perhaps on an event of probability at most $c \lambda r^d N^{-a}$, there is a one-to-one correspondence between 
$$\{\wt \gamma\in \wt \Ac_{\lambda,N} : \lfloor t_{\wt \gamma} dN^2/2 \rfloor> N^{\theta}, |\wt \gamma(0)|<r\}, \quad \{\gamma\in \Ac_{\lambda,N} : \chi_N(t_{ \gamma}) > N^{\theta}, |\psi_N( \gamma(0))|<r\}.$$ 
If $\wt\gamma\in \wt\Ac_{\lambda, N}$ and $\gamma\in\Ac_{\lambda,N}$ are paired in this correspondence, then for the constant $c(d)$ in \eqref{eq:c'},
\begin{align}
\label{c1}
&|t_\gamma - t_{\wt \gamma}| \le c(d) N^{-2}, \qquad\\[1mm] 
\label{c2}
\sup_{0\le s \le 1}& |\gamma(s t_\gamma) -\wt\gamma(s t_{\wt \gamma})| \le c N^{-1} \log N.
\end{align}
\end{theorem}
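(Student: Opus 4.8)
The plan is to follow the route of Lawler and Trujillo Ferreras, decomposing both loop soups according to the root and the length of the loops, but — and this is the only new ingredient — to match the two families of loop masses \emph{exactly}, rather than only up to the first one or two terms of the Taylor expansions \eqref{eq:qn}, \eqref{eq:qndd}; that exact matching is what the sequence $\{a_n\}$ of \eqref{eq:an} encodes, and it is what removes the residual error that in \cite{LTF2007,MR3877544} forced $\theta>2d/(d+4)$. I would first realize $\{\Ac_\lambda\}_\lambda$ and $\{\wt\Ac_\lambda\}_\lambda$ as the level sets of two Poisson point processes on $(\text{rooted loops})\times(0,\infty)$ with intensity $(\text{loop measure})\times\mathrm{Leb}$, a loop being placed in $\Ac_\lambda$ (resp.\ $\wt\Ac_\lambda$) iff its second coordinate is $\le\lambda$. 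This makes both families increasing in $\lambda$ automatically, so it suffices to couple the two Poisson point processes on loop space.

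Next, for each $z_0\in\Zb^d$ and each integer $n\ge1$ I would form two cells: on the Brownian side, the loops $\gamma$ with $\psi_N(\gamma(0))=z_0/N$ and $\chi_N(t_{\Phi_N\gamma})=n/N^2$, whose mass, by the scale and translation invariance of the Brownian loop soup, equals $\int_{a_n}^{a_{n+1}} s^{-1}(2\pi s)^{-d/2}\,ds$, independently of $N$ and $z_0$; on the random-walk side, the continuous-time loops rooted at $z_0$ whose length lies in the ($N$-independent) interval that after the scaling $\wt\Phi_N$ corresponds to $n$. The definition \eqref{eq:an} of $a_n$ is precisely the one for which these two cell masses agree on the nose; here only the leading term of \eqref{eqnd} is used, both to check that $\{a_n\}$ is well defined, increasing and unbounded, and to obtain $|a_n-2n/d|\le c(d)$ (Lemma~\ref{lem:key}). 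Because the two masses coincide, for each $(z_0,n)$ I can couple the restrictions of the two Poisson point processes to that cell so as to have a \emph{label-preserving bijection} between the loops on the two sides; doing this independently over $(z_0,n)$ and pushing forward through $\Phi_N,\wt\Phi_N$ produces — with no exceptional event, and simultaneously for all $\lambda$ and $N$ — a one-to-one correspondence, increasing in $\lambda$, between the $(z_0,n)$-loops of $\wt\Ac_{\lambda,N}$ and of $\Ac_{\lambda,N}$. Since paired loops sit in essentially the same rescaled window $[a_n/N^2,a_{n+1}/N^2)$, \eqref{eq:c'} gives $|t_\gamma-t_{\wt\gamma}|\le c(d)N^{-2}$, i.e.\ \eqref{c1}; and by the definitions of $\psi_N$ and $\chi_N$ this correspondence restricts to the one asserted in the statement (keeping the cells with $n$ above the relevant threshold).

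It remains to couple paired trajectories. A paired $(\wt\gamma,\gamma)$ in cell $(z_0,n)$ consists of a Brownian bridge $\gamma$ and (after erasing holding times) a simple-random-walk bridge $\wt\gamma$, both essentially from and to $z_0/N$ and of rescaled length $\asymp n/N^2$, hence with $\asymp n$ steps. Here I would use that the continuous-time walk, obtained by thinning a single rate-$1$ Poisson clock, has independent coordinates: conditionally on the numbers of steps $m_1,\dots,m_d$ in the $d$ directions — which are Poisson, concentrated around a value $\asymp n$ — the walk bridge is a product of $d$ independent one-dimensional walk bridges, and likewise the Brownian bridge is a product of $d$ independent one-dimensional Brownian bridges. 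Coupling coordinate by coordinate and applying the one-dimensional strong approximation of Lemma~\ref{lem:kmt} (the residual mismatch of step counts and lengths is $O(N^{-2})$ in time, hence $O(N^{-1})$ in space, and is absorbed), I obtain, except on an event of probability $\le C_M\,n^{-M}$ for an $M$ I may choose freely — the KMT bound being exponential, the polynomial power is tunable — the estimate $\sup_{0\le s\le1}|\gamma(st_\gamma)-\wt\gamma(st_{\wt\gamma})|\le cN^{-1}\log N$, which is \eqref{c2} for that pair.

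Finally I would let the exceptional event be the union, over $z_0$ with $|z_0|<rN$ and over $n$ above the threshold ($n\gtrsim N^\theta$), of the per-pair failures above, together with the super-polynomially rare event that some loop counted in the statement has rescaled length exceeding $N^{K}$ for a large constant $K=K(a,d)$ — including the latter lets one replace $\log n$ by $\log N$ uniformly when invoking Lemma~\ref{lem:kmt}. Since the $(z_0,n)$-mass is $\asymp n^{-d/2-1}$, the expected number of failing pairs is at most
\begin{align*}
C_M\,\lambda \sum_{|z_0|<rN}\ \sum_{n\gtrsim N^\theta} n^{-d/2-1}\,n^{-M}\ \lesssim\ C_M\,\lambda\,r^d\,N^{d-\theta(d/2+M)},
\end{align*}
so choosing $M=M(a,d,\theta)$ with $\theta(d/2+M)\ge d+a$ makes this $\le c\,\lambda\,r^d N^{-a}$, and the long-loop event is bounded the same way. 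Off this event the correspondence built above has all the asserted properties. The conceptual crux, and the reason the barrier $\theta>2d/(d+4)$ disappears, is the exact mass matching of the second step: once it is in place there is \emph{no} error from the counting processes, so the only discrepancy is the trajectory coupling, whose failure probability is summable at \emph{every} polynomial scale. The technical heart — and the step I expect to require the most care — is then the uniform control of Lemma~\ref{lem:kmt} all the way down to the shortest admissible loops (with only $\asymp N^\theta$ steps, $\theta$ possibly near $0$), together with the conditioning on the Poisson step counts in the $d$ directions, the bridge structure, and the reconciliation of the slightly mismatched time lengths.
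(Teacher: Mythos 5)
Your proposal takes essentially the same route as the paper: the crucial new ingredient --- exact per-cell mass matching via the sequence $\{a_n\}$ of \eqref{eq:an}, which eliminates the counting error entirely and leaves only trajectory-coupling failures, whose probability decays at a \emph{tunable} polynomial rate in $n$ --- is precisely what the paper does, and the cell decomposition by $(z_0,n)$, the realization as increasing level sets of Poisson processes, the coordinate factorization of the continuous-time walk, and the summation of failure probabilities over admissible $(z_0,n)$ all match the paper's proof of Theorem~\ref{thm1}. One small correction: for the continuous-time case you should invoke Lemma~\ref{lem:kmt3} (built from the continuous-time one-dimensional KMT of Theorem~\ref{thm:kmt2}, i.e.\ \cite{MR4242624}) rather than the discrete-time two-dimensional Lemma~\ref{lem:kmt}, and the paper simply factors the $d$-dimensional continuous-time bridge into $d$ independent time-changed one-dimensional continuous-time bridges, with no need for the extra conditioning on jump counts that you propose.
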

Note that Theorem~\ref{thm1} works for loops at any polynomial scale, rooted anywhere in a macroscopic domain. Moreover, the error probability (of the event on which the coupling fails) is at most $c\lambda r^d N^{-a}$, where $a$ can be made arbitrarily large, compared with Theorem~\ref{thm0} and \eqref{eq:bad_event}.
This strengthened coupling can have many potential applications. For example, as we mentioned earlier, it was proved by \cite{Lu2019, BCL2016} that in dimension two, the outer boundaries of the outermost clusters of the random walk loop soups with intensity $\lambda \in (0,1/2]$ converge to those of the Brownian loop soups (which are in turn distributed as CLE \cite{MR2979861}). This convergence is nevertheless qualitative. If one wants to obtain a precise convergence rate, it then seems necessary to control the random walk loops at mesoscopic scales, everywhere in the domain simultaneously, since the outer boundary of a cluster is a global object.

\smallskip
The result for discrete-time random walk loop soups on $\Zb^d$ is rather similar, but we still state it below in Theorem~\ref{thm2}, so that the reader can directly compare it with \cite{LTF2007} in the $d=2$ case (and with  \cite{MR3877544} in the $d\ge 3$ case). 
In the discrete-time case, we use a different sequence $\{a_n\}_{n\ge1}$ defined in \eqref{eq:an12}, which also satisfies \eqref{eq:c'}. We keep the same definition \eqref{eq:chi1} of $\chi_N$, but inputting the new sequence $\{a_n\}_{n\ge1}$.

\begin{theorem}\label{thm2}
Fix $d\ge 1$. Let $\Ac_\lambda$ be a Brownian loop soup on $\Rb^d$, and $\wt\Ac_\lambda$ be a (discrete-time) random walk loop soup on $\Zb^d$.
One can define $\{\Ac_\lambda\}_{\lambda>0}$ and $\{\wt \Ac_\lambda\}_{\lambda>0}$ on the same probability space so that 
both $\Ac_\lambda$ and $\wt\Ac_\lambda$ are increasing in $\lambda$.

Moreover,   
for every $a>0$ and every $\theta \in (0,2)$, there exists $c>0$ such that for every $r\ge 1, \lambda>0$ and integer $N\ge 1$, except perhaps on an event of probability at most $c \lambda r^d N^{-a}$, there is a one-to-one correspondence between 
$$\{\wt \gamma\in \wt \Ac_{\lambda,N} :  t_{\wt \gamma} dN^2/2 > N^{\theta}, |\wt \gamma(0)|<r\}, \quad \{\gamma\in \Ac_{\lambda,N} : \chi_N(t_{ \gamma}) > N^{\theta}, |\psi_N( \gamma(0))|<r\}.$$ 
If $\wt\gamma\in \wt\Ac_{\lambda, N}$ and $\gamma\in\Ac_{\lambda,N}$ are paired in this correspondence, then for the constant $c(d)$ in \eqref{eq:c'},
\begin{align}
\label{c11}
&|t_\gamma - t_{\wt \gamma}| \le c(d) N^{-2}, \qquad\\[1mm] 
\label{c22}
\sup_{0\le s \le 1}& |\gamma(s t_\gamma) -\wt\gamma(s t_{\wt \gamma})| \le c N^{-1} \log N \qquad \text{ if } d=1,2,\\
\label{c33}
\sup_{0\le s \le 1}& |\gamma(s t_\gamma) -\wt\gamma(s t_{\wt \gamma})| \le c N^{\frac{a-d}{2d}} \log N \quad\; \text{ if } d\ge 3.
\end{align}
\end{theorem}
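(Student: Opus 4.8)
The plan is to follow the architecture of Lawler--Trujillo Ferreras \cite{LTF2007} and Sapozhnikov--Shiraishi \cite{MR3877544} and to change a single ingredient: replace the rigid time-windows $[(k-3/8)N^{-2},(k+5/8)N^{-2})$ by the bespoke windows $W_n^N:=[a_nN^{-2},a_{n+1}N^{-2})$ built from the sequence \eqref{eq:an12}, which will let the two Poisson intensities be matched \emph{exactly} rather than only up to the second-order correction in \eqref{eq:qndd}. Both loop soups are Poisson point processes, so each decomposes according to the root: for the discrete-time random walk loop soup on $\Zb^d$ the loops of length $2n$ rooted at a fixed $x$ form a Poisson variable of mean $\lambda\wt q_d(n)$, and given the count they are i.i.d.\ discrete-time bridges from $0$ to $0$ of length $2n$ translated to $x$, independently over all $(x,n)$; for the Brownian loop soup on $\Rb^d$ the loops whose root lies in the unit cube $Q_x$ centred at $x/N$ and whose (rescaled) time length lies in $W_n^N$ form a Poisson variable of mean $\lambda$ times the loop-measure of that family, and given the count, the sampled roots and the sampled durations, they are independent Brownian bridges. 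Running both decompositions inside one master Poisson point process and taking increasing-in-$\lambda$ restrictions produces the monotone couplings $\{\Ac_\lambda\}$, $\{\wt\Ac_\lambda\}$. One then defines $\{a_n\}_{n\ge1}$ by requiring, for every $N$, that the loop-measure mass per cube $Q_x$ of Brownian loops with rescaled time length in $W_n^N$ equal $\wt q_d(n)$; by scale invariance of the Brownian loop measure this is a single $N$-independent recursion, with $a_1$ pinned down by the finite value $\sum_{n\ge1}\wt q_d(n)$. Using \emph{only} the leading term of \eqref{eq:qndd} (or its $d=1$ analogue) one checks that $\{a_n\}$ is positive, strictly increasing, tends to $\infty$, and satisfies $|a_n-2n/d|\le c(d)$, which is Lemma~\ref{lem:key}; this is the only place the asymptotics of $\wt q_d$ are used. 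With this choice the intensities coincide, so we couple the two soups so that for every $(x,n)$ the two relevant counts agree, and pair the loops arbitrarily within each $(x,n)$.

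Next, fix one such pair: a discrete bridge of length $2n$, whose image $\wt\gamma=\wt\Phi_N(\cdot)$ has $t_{\wt\gamma}=2n/(dN^2)$, and a Brownian loop $\gamma$ with root in $Q_x$ and $t_\gamma\in W_n^N$. Then $|a_n-2n/d|\le c(d)$ and $a_{n+1}-a_n\le c(d)$ give $|t_\gamma-t_{\wt\gamma}|\le c(d)N^{-2}$, which is \eqref{c11}. For the trajectories, rescaling space by $N$ and time by $dN^2$ turns $\gamma$ into a Brownian bridge $B$ of length $\approx 2n$ with per-coordinate variance rate $1/d$ — the same as the discrete walk — up to a deterministic linear time change of ratio $1+O(1/n)$ and a root shift, each of which contributes at most $cN^{-1}\sqrt{\log N}$ to the rescaled sup-distance via the Brownian modulus of continuity. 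It therefore suffices to couple the discrete bridge with $B$. For $d=1,2$, Lemma~\ref{lem:kmt} (using for $d=2$ the $45^\circ$ rotation that splits the walk into two independent one-dimensional walks) gives a coupling with $\sup|\cdot-B|\le c\log(2n)$ off an event of probability $\le c(2n)^{-q}$ for any prescribed $q$; dividing by $N$ and using that the relevant $n$ are $\le N^{O(1)}$ (next paragraph) yields \eqref{c22}. For $d\ge3$ only the weaker dyadic coupling Lemma~\ref{lem:kmtd} is available, giving $\sup|\cdot-B|\le cn^{1/4}\log n$ off an event of probability $\le cn^{-q}$; dividing by $N$ and inserting the bound $n\le cN^{2+2a/d}$ (next paragraph) turns this into $\sup_{0\le s\le1}|\gamma(st_\gamma)-\wt\gamma(st_{\wt\gamma})|\le cN^{(a-d)/(2d)}\log N$, which is \eqref{c33}.

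For the union bound, only loops entering the correspondence matter: roots in $B_r$ — at most $c(rN)^d$ lattice sites — and, for $\gamma$, $\chi_N(t_\gamma)>N^{\theta-2}$, which by $a_n\asymp 2n/d$ forces $n>c(d)N^\theta$. Since the number of Brownian loops rooted in $B_r$ with rescaled time length $\ge N^{2a/d}$ is Poisson of mean at most $c\lambda r^dN^{-a}$, off an event of that probability every relevant loop has $n\le cN^{2+2a/d}$, validating the length bounds used above. Moreover the loops whose bridge coupling fails form a thinned Poisson point process of total mean at most $\lambda\sum_{x\in B_r}\sum_{n>c(d)N^\theta}\wt q_d(n)p_n\le c\lambda r^dN^d\big(\sup_{n>c(d)N^\theta}p_n\big)$; choosing the coupling parameter $q$ large enough (depending only on $a,d$) makes $\sup_{n>c(d)N^\theta}p_n\le cN^{-a-d}$, so this mean is at most $c\lambda r^dN^{-a}$ and with at least that probability no relevant coupling fails. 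Summing the two exceptional events together with the null event that the matched Poisson counts differ gives the stated bound $c\lambda r^dN^{-a}$, on whose complement \eqref{c11}--\eqref{c33} hold.

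Theorem~\ref{thm1} is proved by the same four steps, replacing \eqref{eq:qndd} and \eqref{eq:an12} by the continuous-time mass $q_d(t)$ of \eqref{eqnd} and the sequence \eqref{eq:an}: here the defining rate-$1$ Poisson clock of the continuous-time walk splits into $d$ independent rate-$1/d$ clocks, so the coordinates are genuinely independent one-dimensional continuous-time walks, Lemma~\ref{lem:kmt} applies in every dimension, and \eqref{c2} follows for all $d$. The genuinely delicate point is the trajectory coupling for $d\ge3$: the multidimensional dyadic coupling is only polynomially strong, so demanding an \emph{arbitrarily} small polynomial failure probability $N^{-a}$ forces the union bound to cover ever longer, rarer loops, and the sup-distance degrades accordingly — this is exactly the origin of the exponent $(a-d)/(2d)$ in \eqref{c33}. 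A secondary but real point is establishing $|a_n-2n/d|\le c(d)$ in Lemma~\ref{lem:key} cleanly from the loop-mass asymptotics, and carrying the exact $\lambda$- and $r^d$-dependence through the union bound.
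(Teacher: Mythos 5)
Your proposal is correct and follows essentially the same route as the paper: replace the rigid $3/8$-$5/8$ time windows by the bespoke sequence $\{a_n\}$ from \eqref{eq:an12} so that the Poisson intensities match exactly per lattice site and per time window, then use the relevant KMT-type bridge coupling (Lemma~\ref{lem:kmt} for $d\le 2$, Lemma~\ref{lem:kmtd} for $d\ge 3$) on the i.i.d.\ pairs, and finally a union bound over the at most $cr^dN^d$ sites and $n\le N^{2+2a/d}$ loop lengths, choosing $\eta$ (your $q$) of order $(a+d)/\theta$ to make the failure probability per loop at most $N^{-a-d}$. One small expository caveat: rather than ``pair the loops arbitrarily within each $(x,n)$'' after sampling, the construction must build in the KMT coupling from the start by sampling the i.i.d.\ pre-coupled pairs $(\wt\ell(n,z;m),\ell(n,z;m))$ and then letting both soups read off the same matched Poisson counts, which is clearly what you intend and is exactly what the paper does.
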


\begin{remark}
For $d\ge 3$, we get the bound $c N^{\frac{a-d}{2d}} \log N$ in \eqref{c33}, using the coupling between the random walk bridge and the Brownian bridge (Lemma~\ref{lem:kmtd}) established in  \cite{MR3877544}. 
This bound tends to $0$ as $N\to\infty$ only when $a<d$, hence we cannot reduce the error probability (of the event on which the coupling fails) below $c\lambda r^d N^{-d}$ in this case.
Note that this does not prevent us from choosing $\theta$ close to $0$, hence this coupling does still control loops at all polynomial scales. 
If we choose $a=d/2$, then we get the same $cN^{-1/4} \log N$ bound as in  \cite{MR3877544}. 

We believe that it is possible to improve the bound \eqref{c33}, by improving the coupling of Lemma~\ref{lem:kmtd}. However, if we do not care about the time parametrization, for example if we only care about the Hausdorff distance between the coupled loops, then \eqref{c2} already provides the strong bound $c N^{-1} \log N$.
\end{remark}

\begin{remark}\label{rmk:time}
An immediate corollary of Theorems~\ref{thm1} and~\ref{thm2} is that the discrete-time and continuous-time random walk loop soups can also be coupled together, since they can both be coupled with the Brownian loop soup. This coupling is, however, quite different from the obvious coupling where we let the two versions of loop soups have the same trace on $\Zb^d$ (except for the single-vertex loops). Note that two random walk loops with the same trace, respectively parametrized by discrete and continuous time, typically do not have time lengths close to each other in the sense of \eqref{c11}.
\end{remark}

\section{Preliminaries}\label{sec:prelim}
In this section, we recall the definitions and some basic properties of the Brownian loop soup, the discrete-time and continuous-time random walk loop soups on $\Zb^d$, and record several versions of couplings between Brownian bridges and random walk bridges.
\subsection{Brownian loop soup}
The Brownian loop soup on $\Rb^d$ with intensity $\lambda>0$ is a Poisson point process with intensity $\lambda \br$, where $\br$ is a $\sigma$-finite measure on Brownian loops defined as follows
\begin{align}\label{eq:bl}
\br=\int_{\Rb^d}\int_0^\infty \frac{1}{t (2\pi t)^{d/2}} \Pb^{\rm br}(z,z;t) dt dz,
\end{align}
where $dz$ is according to the Lebesgue measure on $\Rb^d$, and $\Pb^{\rm br}(z,z;t)$ is the probability measure of a $d$-dimensional Brownian bridge (which is a loop) starting and ending at $z$ with time length $t$. The point $z$ is also called the \emph{root} of the loop.

It is common to consider the loops in the Brownian loop soup as unrooted loops, by forgetting their roots. 
However, for the purpose of this paper, we will always regard the Brownian loops as rooted loops, which makes it easier to compare the difference between two parametrized loops.

It was shown in \cite{MR2045953} that the two-dimensional Brownian loop soup (as a set of unrooted loops) is invariant under conformal maps (up to time reparametrization). In this paper, we will use the fact that the $d$-dimensional Brownian loop soup (as a set of rooted loops) is invariant under the usual Brownian scaling (i.e.\ we scale space by $c$ and time by $c^2$), which is obvious from \eqref{eq:bl}.

\subsection{Discrete-time random walk loop soup}
A \emph{rooted discrete-time} random walk loop on the  lattice $\Zb^d$ is a path $(z_1, \cdots, z_j)$ in $\Zb^d$ such that $|z_i - z_{i+1}| =1$ for all $1\le i \le j-1$ and $z_j =z_1$. Let $t_\gamma:=j-1$ be the length of $\gamma$.  The point $z_1=z_j$ is called the root of the loop. One can also unroot the loops on $\Zb^d$, but we will only look at rooted loops in this paper.

Let $\rwl$ be the random walk loop measure which assigns weight $(2n)^{-1} (2d)^{-2n}$ to each rooted loop on $\Zb^d$ with length $2n$, for any integer $n\ge 1$.
For $z\in\Zb^d$ and integer $n\ge 1$, let $\nu_{n,z}$ be the measure $\rwl$ restricted to loops rooted at $z$ with length $2n$. Then we have
\begin{align*}
\rwl=\sum_{z\in \Zb^d} \sum_{n\ge 1} \nu_{n,z}.
\end{align*}
The (discrete-time) random walk loop soup with intensity $\lambda$ is a Poisson point process with intensity $\lambda \rwl$, which is a multi-set of random walk loops.
Let (note that $|\nu_{n,z}|$ is the same for all $z$)
\begin{align}\label{eq:qn1}
\wt q_d(n):=|\nu_{n,z}|.
\end{align}
The combinatorial formula of $\wt q_2(n)$ is given in  \eqref{eq:qn} (and a similar formula can easily be obtained for $\wt q_1(n)$).
In general, the Taylor expansion of $\wt q_d(n)$ is given by \eqref{eq:qndd}.

\subsection{Continuous-time random walk loop soup}
Consider the continuous-time random walk on $\Zb^d$ which spends an exponential time with mean $1$ at each vertex, before jumping to one of the neighboring vertices with probability $1/(2d)$. For $x,y\in\Zb^d$ and $t>0$, let $p_t(x,y)$ be its transition kernel. Let $\Pb^{\rm  crw}(x,y;t)$ be the probability measure of a continuous-time random walk bridge with time length $t$ starting at $x$ and ending at $y$.
Let $\mu^{\rm crw}$ be the measure on random walk loops defined by
\begin{align*}
\mu^{\rm crw}=\sum_{z\in \Zb^d} \int_0^\infty \frac{1}{t} p_t(0,0) \Pb^{\rm crw}(z,z;t) dt.
\end{align*}
The (continuous-time) random walk loop soup with intensity $\lambda$ is a Poisson point process with intensity $\lambda \mu^{\rm crw}$. Let 
\begin{equation}\label{eq:qdt_def}
q_d(t):=\frac{1}{t} p_t(0,0).
\end{equation}
Thanks to the following Taylor development of $p_t(0,0)$ (see e.g.\ \cite[Theorem 2.13]{MR2677157})
\begin{align}\label{eq:pt0}
p_t(0,0)=\bigg(\frac{d}{2\pi t} \bigg)^{d/2} (1+O(t^{-1})),
\end{align}
we get the Taylor development \eqref{eqnd} of $q_d(t)$.
We also define the following quantity
\begin{align}\label{eq:qdn}
Q_d(n):=\int_{2n}^{2n+2} q_d(t) dt.
\end{align}

\subsection{Coupling Brownian bridge and discrete-time random walk bridge}
In this subsection, we review some known results regarding the coupling of Brownian bridges and discrete-time random walk bridges.
We regard the random walk bridges as continuous curves, by linear interpolation.

First of all, we need the following coupling between a discrete-time simple random walk bridge on $\Zb$ and a Brownian bridge with length $2n$, established in \cite[Theorem 6.4]{LTF2007}.  
\begin{theorem}[Theorem 6.4, \cite{LTF2007}]
\label{thm:kmt1}
There exist $0<c, \alpha <\infty$, such that for every positive integer $n$, there is a probability space on which are defined a one-dimensional Brownian bridge $B$ and a (discrete-time) random walk bridge $S$ on $\Zb$, both with length $2n$, such that for all $r>0$, 
\begin{align*}
\Pb\left[ \sup_{0\le t \le 2n} |B_t - S_t| > r c \log n \right] \le c n^{\alpha -r}.
\end{align*}
\end{theorem}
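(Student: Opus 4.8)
\textbf{Proof proposal for Theorem~\ref{thm:kmt1}.}
The plan is to deduce this bridge coupling from the classical strong approximation of Koml\'os, Major and Tusn\'ady \cite{MR375412} for a simple random walk and a Brownian motion on the interval $[0,2n]$, by a conditioning argument. Concretely, the Hungarian dyadic construction produces a probability space carrying a one-dimensional simple random walk $(\wh S_k)_{0\le k\le 2n}$ and a standard Brownian motion $(\wh B_t)_{0\le t\le 2n}$ such that, for suitable constants $c_0,\lambda_0>0$ and all $x>0$,
\[
\Pb\Big[\max_{0\le k\le 2n}|\wh S_k-\wh B_k|>c_0\log n+x\Big]\le c_0\, e^{-\lambda_0 x},
\]
and, crucially, such that the endpoint $\wh S_{2n}$ is a deterministic monotone function $g$ of $\wh B_{2n}$: at the top level of the dyadic scheme one couples $\wh S_{2n}$ with $\wh B_{2n}$ by the quantile transform $\wh S_{2n}=g(\wh B_{2n})$, and every piece of randomness subsequently used to fill in the dyadic midpoints is independent of $\wh B_{2n}$. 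Passing from the discrete values $(\wh S_k)$ to the linearly interpolated curve only costs an extra $O(\log n)$ term in the supremum, since the oscillation of $\wh B$ over unit subintervals of $[0,2n]$ exceeds a constant multiple of $\log n$ only with probability $O(n^{-M})$ for any fixed $M$, and linear interpolation moves $\wh S$ by at most $1$ over each unit step.

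Next I would introduce the process $\wt B_t:=\wh B_t-\tfrac{t}{2n}\wh B_{2n}$, which is a Brownian bridge on $[0,2n]$ and, being jointly Gaussian with $\wh B_{2n}$ with vanishing covariance, is independent of $\wh B_{2n}$. Since $\{\wh S_{2n}=0\}=\{\wh B_{2n}\in g^{-1}(0)\}$ is measurable with respect to $\sigma(\wh B_{2n})$, conditioning on this event leaves the law of $\wt B$ unchanged; hence, under $\Pb[\,\cdot\mid \wh S_{2n}=0\,]$, the process $\wt B$ is still a Brownian bridge of length $2n$, while $\wh S$ conditioned on $\{\wh S_{2n}=0\}$ is by definition a random walk bridge of length $2n$. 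I then take $(B,S):=(\wt B,\wh S)$ under this conditional law as the coupling claimed in the statement. On $\{\wh S_{2n}=0\}$ one has $|\wh B_{2n}|=|\wh B_{2n}-\wh S_{2n}|\le c_0\log n+x$ on the good event, so $\sup_{0\le t\le 2n}|\wt B_t-\wh B_t|=\sup_t \tfrac{t}{2n}|\wh B_{2n}|\le c_0\log n+x$; combining with the KMT bound and the interpolation correction via the triangle inequality gives $\sup_t|\wt B_t-\wh S_t|\le 2(c_0\log n+x)+O(\log n)$ on an event of probability at least $1-c_0 e^{-\lambda_0 x}-O(n^{-M})$.

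Finally I would control the conditional probability. By Stirling, $\Pb[\wh S_{2n}=0]=\binom{2n}{n}2^{-2n}\ge c_1 n^{-1/2}$, so the bad event has conditional probability at most $\Pb[\mathrm{bad}]/\Pb[\wh S_{2n}=0]\le (c_0/c_1)\, n^{1/2} e^{-\lambda_0 x}+O(n^{1/2-M})$. Choosing $x$ equal to a suitable constant multiple of $r\log n$ (large enough that $\lambda_0$ times that multiple is at least $1$), absorbing the $O(\log n)$ interpolation error into the constant in front of $\log n$, and taking $M$ large, turns the right-hand side into $c\, n^{\alpha-r}$ with $\alpha=1/2$ and an appropriate $c$; for the small range of $r$ where this exponent is nonnegative the asserted bound is automatic since probabilities are at most $1$. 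The one input that must be pinned down carefully — and the main obstacle — is the structural property that the coupling can be arranged so that $\wh S_{2n}$ is $\sigma(\wh B_{2n})$-measurable; this is automatic for the Hungarian dyadic construction but has to be extracted from it. Alternatively, one can bypass classical KMT and run the dyadic scheme directly on bridges: at each of the $O(\log n)$ scales, the conditional midpoint value of the walk bridge (a shifted hypergeometric variable) is coupled to the Gaussian conditional midpoint of the Brownian bridge by the quantile transform, at the cost of a more delicate summation of the per-scale errors relying on local central limit estimates.
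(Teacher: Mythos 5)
Note first that the paper does not prove Theorem~\ref{thm:kmt1}: it is quoted verbatim as Theorem 6.4 of \cite{LTF2007}, so there is no in-paper proof to compare against. Evaluating your argument on its own terms: the conditioning route is sound, and you have correctly identified the one genuinely delicate point, namely the structural fact that in the Hungarian dyadic construction the endpoint $\wh S_{2n}$ is a (monotone, hence Borel-measurable) function of $\wh B_{2n}$ alone. This is what makes $\{\wh S_{2n}=0\}$ a $\sigma(\wh B_{2n})$-event so that the bridge part $\wt B_t=\wh B_t-\tfrac{t}{2n}\wh B_{2n}$ retains its unconditional Brownian-bridge law, and it is emphatically not a black-box consequence of an abstract KMT existence statement; one has to open up the dyadic scheme (or, as in your alternative, run the dyadic/quantile coupling directly on the bridge at each of the $O(\log n)$ scales, which is what a direct bridge construction amounts to). The Stirling bound $\Pb[\wh S_{2n}=0]\asymp n^{-1/2}$ is the correct source of the exponent $\alpha=1/2$.

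The only concrete slip is the phrase ``taking $M$ large'' for the interpolation error. A fixed $M$ does not give a bound uniform in $r$: you need $O(n^{1/2-M})\le c\,n^{1/2-r}$ for \emph{every} $r>0$. The fix is easy and you should make it explicit: let the oscillation threshold for $\wh B$ over unit subintervals scale as a constant multiple of $r\log n$ rather than a fixed multiple of $\log n$. By the reflection principle and a union bound over the $2n$ intervals this yields an error $\le 4n\exp(-C r^2(\log n)^2/2)$, which for $n\ge 2$ is super-polynomially small in $n$ and dominated by $n^{1/2-r}$ for all $r$, while the extra $O(r\log n)$ in the threshold is absorbed into the constant $c$ multiplying $r\log n$. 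With that adjustment (and with the trivial observation you already make for the range $r\le\alpha$, where $n^{\alpha-r}\ge 1$), the argument is complete.
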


By the trick of rotating $\Zb^2$ by $45$ degrees (see the second paragraph of Section~\ref{subsec:main}), the simple random walk on $\Zb^2$ is the combination of two independent simple random walks on $\Zb$, hence Theorem~\ref{thm:kmt1} implies the following coupling (see \cite[Lemma 3.1]{LTF2007} and the remark just below it).
\begin{lemma}\label{lem:kmt}
There exists a constant $c_0>0$, such that for each $\eta>0$, there exists $c(\eta) \in (0,\infty)$ so that the following holds. For every positive integer $n$, there exists a probability space $(\Omega, \Fc, \Pb)$ on which are defined a two-dimensional Brownian bridge $(B_t, 0\le t\le 1)$ and a two-dimensional (discrete-time) random walk bridge $(S_t, 0\le t\le 2n)$ such that
\begin{align}\label{eq:lawler}
\Pb\left[ \sup_{0\le s\le 1} \left| n^{-1/2} S_{2ns} - B_s \right | \ge c(\eta) n^{-1/2} \log n \right] \le c_0 n^{-\eta}.
\end{align}
\end{lemma}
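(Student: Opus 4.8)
The plan is to reduce the two-dimensional statement to two essentially independent applications of the one-dimensional coupling of Theorem~\ref{thm:kmt1}, using the classical observation that, after rotating $\Zb^2$ by $45$ degrees, the simple random walk on $\Zb^2$ splits into two independent simple random walks on $\Zb$.

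First I would record the decomposition. Writing a step of the planar walk in coordinates $(X,Y)$ and passing to $U=X+Y$, $V=X-Y$, one checks on the four admissible increments $(\pm1,0),(0,\pm1)$ that $U$ and $V$ each make a $\pm1$ step and that these two increments are independent; hence $(U_k)_k$ and $(V_k)_k$ are independent simple random walks on $\Zb$ (they live on the even sublattice $\{u+v\text{ even}\}$, but only their individual laws matter). Since constraining the planar walk to be a bridge of length $2n$ from $0$ to $0$ amounts to imposing $\{U_{2n}=0\}\cap\{V_{2n}=0\}$, and these two events concern independent walks, the planar random walk bridge $S$ of length $2n$ is, in rotated coordinates, a pair of independent one-dimensional random walk bridges $U,V$ of length $2n$; linearity of the coordinate change makes this compatible with linear interpolation, so $X_t=(U_t+V_t)/2$ and $Y_t=(U_t-V_t)/2$ as continuous curves on $[0,2n]$.

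Next I would build the coupling on the product of two probability spaces provided by Theorem~\ref{thm:kmt1}, obtaining independent one-dimensional Brownian bridges $\beta^U,\beta^V$ of length $2n$ such that each of $\sup_{0\le t\le 2n}|\beta^U_t-U_t|$ and $\sup_{0\le t\le 2n}|\beta^V_t-V_t|$ is at most $rc\log n$ except on an event of probability at most $cn^{\alpha-r}$. I then set $B^1_s=n^{-1/2}(\beta^U_{2ns}+\beta^V_{2ns})/2$ and $B^2_s=n^{-1/2}(\beta^U_{2ns}-\beta^V_{2ns})/2$ for $s\in[0,1]$, and check that $(B^1,B^2)$ is a standard two-dimensional Brownian bridge on $[0,1]$: a one-dimensional Brownian bridge of length $2n$ has variance $2ns(1-s)$ at time $2ns$, so each of $n^{-1/2}\beta^U$ and $n^{-1/2}\beta^V$, reparametrised to $[0,1]$, has variance $2s(1-s)$ at time $s$, whence by independence $B^1,B^2$ each have variance $s(1-s)$ and are uncorrelated at all pairs of times, hence independent since jointly Gaussian; each marginal is then a standard Brownian bridge. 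This normalisation is exactly that of $n^{-1/2}S_{2ns}$, whose $X$-coordinate has variance $ns(1-s)$ for the length-$2n$ bridge.

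Finally, for the probability bound I would combine the coordinates: in the original coordinates each of the $X$- and $Y$-discrepancies between $n^{-1/2}S_{2ns}$ and $B_s$ is, up to sign, the average of $n^{-1/2}(U_{2ns}-\beta^U_{2ns})$ and $n^{-1/2}(V_{2ns}-\beta^V_{2ns})$, so
\[
\sup_{0\le s\le1}\big|n^{-1/2}S_{2ns}-B_s\big|\ \le\ \sqrt2\,\max\Big(\sup_{0\le t\le 2n}\big|n^{-1/2}(U_t-\beta^U_t)\big|,\ \sup_{0\le t\le 2n}\big|n^{-1/2}(V_t-\beta^V_t)\big|\Big),
\]
and a union bound over the two coupling events shows that the left-hand side exceeds $\sqrt2\,rc\,n^{-1/2}\log n$ with probability at most $2cn^{\alpha-r}$. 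Taking $r=\alpha+\eta$ yields the lemma with $c(\eta)=\sqrt2(\alpha+\eta)c$ and $c_0=\max(2c,1)$, the $1$ only to cover the degenerate case $n=1$ where $\log n=0$; in particular $c_0$ does not depend on $\eta$. The only point requiring real care is the bookkeeping of the variance normalisations through the rotation and the $n^{-1/2}$ rescaling (the factors $2$ and $\sqrt2$), together with the observation that conditioning to form a bridge preserves the independence of the two rotated walks; the rest is routine.
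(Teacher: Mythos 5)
Your proof is correct and follows essentially the same route that the paper (and the original reference \cite{LTF2007}) indicates: rotate $\Zb^2$ by $45$ degrees so that the planar random walk bridge decomposes into two independent one-dimensional random walk bridges, apply Theorem~\ref{thm:kmt1} to each coordinate, and recombine, tracking the $\sqrt2$ and variance normalisations. The only thing the paper does differently is to leave this verification implicit and cite \cite[Lemma~3.1]{LTF2007}; your write-up supplies the details, including the (correct) observation that $c_0$ can be taken independent of $\eta$.
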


In \cite[Lemma 4.2]{MR3877544}, the authors established the following coupling between random walk bridges on $\Zb^d$ and $d$-dimensional Brownian bridges, for $d\ge 3$. They got a weaker $n^{-1/4}$ factor in \eqref{eq:ss}, compared with $n^{-1/2}$ in \eqref{eq:lawler}, due to the technical difficulty arising from the fact that the movements of the random walk in $d$ coordinates are not independent.
\begin{lemma}[Lemma 4.2, \cite{MR3877544}]
\label{lem:kmtd}
Fix $d\ge 3$. For any $\eta>0$,  there exists $c(\eta,d) \in (0,\infty)$ so that the following holds. For every positive integer $n$, there exists a probability space $(\Omega, \Fc, \Pb)$ on which are defined a $d$-dimensional Brownian bridge $(B_t, 0\le t\le 1)$ and a (discrete-time) random walk bridge $(S_t, 0\le t\le 2n)$ on $\Zb^d$ such that
\begin{align}\label{eq:ss}
\Pb\left[\sup_{0\le s\le 1} \big| (2n/d)^{-1/2} S_{2ns} -B_s\big| > c(\eta,d) n^{-1/4} \log n \right] \le c(\eta,d) n^{-\eta}.
\end{align}
\end{lemma}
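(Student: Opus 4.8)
The plan is to reduce everything to the one-dimensional bridge coupling of Theorem~\ref{thm:kmt1} by exploiting the ``coordinate-selection'' structure of the simple random walk, at the cost of an extra random time change which is precisely what degrades the rate from $n^{-1/2}$ to $n^{-1/4}$. I would represent the random walk bridge $S=(S^1,\dots,S^d)$ of length $2n$ by a \emph{type sequence} $\tau_1,\dots,\tau_{2n}\in\{1,\dots,d\}$ (recording which coordinate moves at each step) together with, for each $i$, the $\pm1$ sequence of signs of the moves in coordinate $i$; writing $N_i:=\#\{j\le 2n:\tau_j=i\}$ and $L_i(k):=\#\{j\le k:\tau_j=i\}$, one has $S^i_k=Y^i_{L_i(k)}$, where $Y^i$ is the one-dimensional walk built from the signs in coordinate $i$. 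Conditioning on $S_{2n}=0$ forces every $N_i$ to be even and makes, given the types, the $d$ walks $Y^i$ independent one-dimensional simple random walk bridges of lengths $N_i$. For each $i$ I would then invoke Theorem~\ref{thm:kmt1} (with its ``$2n$'' equal to $N_i$), on an enlarged space and independently over $i$, to couple $Y^i$ with a standard one-dimensional Brownian bridge $W^i$ on $[0,N_i]$; setting $B^i_s:=N_i^{-1/2}W^i_{N_i s}$ for $s\in[0,1]$ produces independent standard one-dimensional Brownian bridges, so $B:=(B^1,\dots,B^d)$ is the desired $d$-dimensional Brownian bridge. Since $(2n/d)^{-1/2}S^i_{2ns}=(2n/d)^{-1/2}Y^i_{L_i(2ns)}$, the whole problem becomes to bound $(2n/d)^{-1/2}Y^i_{L_i(2ns)}-B^i_s$ uniformly in $s$ and $i$.

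I would write this difference as the sum of three terms: the coupling error $(2n/d)^{-1/2}\bigl(Y^i_{L_i(2ns)}-W^i_{L_i(2ns)}\bigr)$; the time-change error $(2n/d)^{-1/2}\bigl(W^i_{L_i(2ns)}-W^i_{N_i s}\bigr)$; and, using $(2n/d)^{-1/2}W^i_{N_i s}=\sqrt{dN_i/(2n)}\,B^i_s$, the variance-mismatch term $\bigl(\sqrt{dN_i/(2n)}-1\bigr)B^i_s$. The first term is $O(n^{-1/2}\log n)$ off an event of probability $O(n^{-\eta})$ by Theorem~\ref{thm:kmt1} with the free parameter chosen large, using $\log N_i\asymp\log n$ on the event $\{N_i\ge n/d\}$. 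For the third term, concentration of the (mildly tilted multinomial) type counts of a random walk bridge gives $|N_i-2n/d|=O(\sqrt{n\log n})$, hence $\sqrt{dN_i/(2n)}-1=O(\sqrt{(\log n)/n})$, which together with $\sup_s|B^i_s|=O(\sqrt{\log n})$ yields $O(n^{-1/2}\log n)$. The middle term is the crucial one: a uniform-in-$s$ estimate on the counting process (a sampling-without-replacement process given the counts) gives $\sup_s|L_i(2ns)-N_i s|=O(\sqrt{n\log n})$, and then the non-asymptotic Lévy modulus of continuity of $W^i$ on $[0,N_i]$ bounds its oscillation over a window of that size by $O\bigl(\sqrt{\sqrt{n\log n}\,\log n}\bigr)=O\bigl(n^{1/4}(\log n)^{3/4}\bigr)$; multiplying by $(2n/d)^{-1/2}$ gives $O\bigl(n^{-1/4}(\log n)^{3/4}\bigr)\le c\,n^{-1/4}\log n$. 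Summing the three contributions, the middle one dominates, which is \eqref{eq:ss}.

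It remains to assemble the pieces. I would work on the event $\{\text{all }N_i\text{ even and }\ge n/d\}$, which has probability bounded below by a constant $c_d>0$ for all large $n$, so conditioning on it preserves polynomial smallness up to the factor $c_d^{-1}$; for the finitely many remaining $n$ one takes $c(\eta,d)$ large enough that $c(\eta,d)n^{-\eta}\ge 1$. A union bound over the $d$ coordinates and over the finitely many auxiliary events—the $d$ applications of Theorem~\ref{thm:kmt1}, the deviation bounds for the type counts and for the counting processes, and the supremum bounds for the Brownian bridges $B^i$—each made $\le n^{-\eta'}$ with $\eta'$ as large as needed by tuning the corresponding constants, yields the single error probability $c(\eta,d)n^{-\eta}$. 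Finally, the supremum over all $s\in[0,1]$ exceeds that over the grid $\{j/(2n):0\le j\le 2n\}$ by at most $O(n^{-1/2})$, since the rescaled walk moves $O(n^{-1/2})$ per unit step and each $B^i$ has modulus $O(\sqrt{(\log n)/n})$ over intervals of length $1/(2n)$; this is absorbed.

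The main obstacle is the middle term: when the $d$ coordinates are played out in a random order, the ``local time'' $L_i(2ns)$ that coordinate $i$ has accumulated by global time $2ns$ fluctuates by $O(\sqrt{n\log n})$ around its mean $N_i s$, and no sharper one-dimensional input removes this—it is genuinely the source of the $n^{-1/4}$ rate rather than $n^{-1/2}$. Making this precise requires a uniform-in-$s$ control of the counting process combined with a quantitative modulus of continuity for the Brownian bridge carrying the right logarithmic powers; the rest is routine concentration and union bounds, together with the mild bookkeeping around conditioning on a positive-probability event.
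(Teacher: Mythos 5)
Note first that the paper does not prove this lemma: it is quoted verbatim from \cite[Lemma 4.2]{MR3877544}, so there is no in-paper proof to compare against. Your reconstruction — decomposing the $d$-dimensional bridge into the type sequence plus $d$ conditionally independent one-dimensional bridges of lengths $N_i$, coupling each via Theorem~\ref{thm:kmt1}, and tracing the $n^{-1/4}$ rate to the $O(\sqrt{n\log n})$ fluctuation of the local times $L_i(2ns)-N_is$ composed with the Brownian modulus of continuity — is sound and is essentially the strategy of the cited source; the three-term decomposition and the resulting $O(n^{-1/4}(\log n)^{3/4})$ dominant contribution are correct. One small point to tighten: the phrase ``work on the event $\{\text{all }N_i\text{ even and }N_i\ge n/d\}$, which has probability bounded below by a constant'' is not by itself enough, since a constant-probability complement would ruin the $n^{-\eta}$ bound; what saves you is that under the bridge conditioning all $N_i$ are automatically even and $\Pb[N_i<n/d]$ is exponentially small by the same concentration you invoke for the variance-mismatch term, so the complement contributes negligibly and the conditional-to-unconditional transfer via $\Pb[A\mid E]\le\Pb[A]/\Pb[E]$ goes through.
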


\section{Coupling Brownian bridge and continuous-time random walk bridge}

The goal of this section is to prove the KMT coupling stated in Theorem~\ref{lem:kmt3}.
Our proof relies on the following result \cite[Theorem 3]{MR375412} from the original paper by Koml\'os, Major, and Tusn\'ady, and is otherwise self-contained.
\begin{theorem}[Theorem 3, \cite{MR375412}]\label{thm3.1}
Fix $n\ge 1$. Let $X_1, \ldots, X_n$ be i.i.d.\ uniform random variables on $[0,1]$. Let $F_n(t):=\sum_{k=1}^n \one_{X_k\le t}$. There is a version of $F_n(t)$ and a Brownian bridge $B_n(t)$ with time length $1$ such that
\begin{align*}
\Pb\left(\sup_{0\le t\le 1} |F_n(t) -nt - n^{1/2} B_n(t)| > C \log n +x\right) <K e^{-\lambda x},
\end{align*}
for all $x$, where $C, K, \lambda$ are positive absolute constants.
\end{theorem}

We first prove the coupling in dimension $d=1$, stated below.

\begin{lemma}\label{thm:kmt2}
For any $\eta>0$, there exists $c(\eta) \in (0,\infty)$ so that the following holds.
For all $t>0$, there is a probability space on which are defined a one-dimensional Brownian bridge $(B_s, 0\le s\le 1)$ and a (continuous-time) random walk bridge $(S_s, 0\le s\le t)$ on $\Zb$ such that 
\begin{align*}
\Pb\left[ \sup_{0\le s \le 1} | t^{-1/2} S_{st} - B_s| > c(\eta) t^{-1/2}\log t \right] \le c(\eta) t^{-\eta}.
\end{align*}
\end{lemma}

\begin{proof}
Suppose that $S$ makes a total of $2M$ steps. On the event $\{M=m\}$, since the times at which the $2m$ steps of $S$ occur are uniformly distributed in $[0,t]$, we can write for $s\in[0,t]$
\begin{align*}
S_s=\sum_{i=1}^m \one_{U_i\le s} - \sum_{i=1}^m \one_{V_i\le s},
\end{align*}
where $U_1, \ldots, U_m$ and $V_1, \ldots, V_m$ are $2m$ independent and uniform random variables in $[0,t]$.
By Theorem~\ref{thm3.1}, there exist two independent Brownian bridges $B^1_m$ and $B^2_m$ with time length $1$ and absolute constants  $c_1, c_2, \lambda>0$ such that for all $x>0$
\begin{align}
&\Pb \bigg[ \sup_{0\le s\le 1} \big |\sum_{i=1}^m \one_{U_i\le st} - m s - \sqrt{m} B^1_m(s) \big| > c_1 \log m +x \bigg] < c_2 \exp(-\lambda x),\\
&\Pb \bigg[ \sup_{0\le s\le 1} \big|\sum_{i=1}^m \one_{V_i\le st} - m s - \sqrt{m} B^2_m(s) \big| > c_1 \log m +x \bigg] < c_2 \exp(-\lambda x).
\end{align}
Let $B_m:= (B^1_m - B^2_m)/\sqrt{2}$. Note that $B_m$ is also a Brownian bridge with time length $1$. We have
\begin{align}\label{eq:B_m}
\Pb \bigg[ \sup_{0\le s\le 1} \big |S_{st} - \sqrt{2m} B_m(s) \big| > 2c_1\log m +2x \mid M=m \bigg] < c_2\exp(-\lambda x).
\end{align}
Since \eqref{eq:B_m} is true for all $m$, we have
\begin{align}\label{eq:B_M}
\Pb \bigg[ \sup_{0\le s\le 1} \big |S_{st} - \sqrt{2M} B_M(s) \big| > 2c_1\log M +2x \bigg] < c_2\exp(-\lambda x).
\end{align}
Note that it is possible to make the Brownian bridges $(B_m, m\ge 0)$ independent from each other and independent from $M$. Therefore $B_M$ is also a Brownian bridge with time length $1$.

The distribution of $M$ can be derived explicitly. Note that the (continuous-time) random walk bridge $S$ with length $t$ is distributed as a (continuous-time) random walk $X$ conditioned on $X_t=0$. For any integer $m\ge 0$, the probability that $X$ makes exactly $m$ upward jumps and $m$ downward jumps on $[0,t]$ is given by $e^{-t} \frac{(t/2)^{2m}}{(m!)^2}$. This implies that
\begin{align*}
\Pb(X_t=0) = \sum_{m=0}^\infty e^{-t} \frac{(t/2)^{2m}}{(m!)^2}=e^{-t} I_0(t),
\end{align*}
where $I_0(t) = \sum_{m=0}^\infty \frac{(t/2)^{2m}}{(m!)^2}$ is the modified Bessel function of the first kind of order zero. This implies
\begin{align}\label{eq:dist_M}
\Pb(M=m)=\Pb(X \text{ makes } 2m \text{ jumps on } [0,t] \mid X_t=0) =\frac{(t/2)^{2m}}{(m!)^2 I_0(t)}.
\end{align}
Let us now show that $M$ is concentrated around $t/2$. More precisely, we show that there exists $c_3>0$ such that for all $0\le u \le  \sqrt{t}$,
\begin{align}\label{eq:concentration}
\Pb(|2M- t| > u\sqrt{t}) \le c_3 e^{-u^2/4}.
\end{align}
By \eqref{eq:dist_M}, we have
\begin{align*}
\Eb[\exp(\theta (2M-t))] = e^{-\theta t} \frac{I_0(t e^\theta)}{I_0(t)}.
\end{align*}
Since $I_0(x) \asymp x^{-1/2} e^x$ as $x\to \infty$, there exist $t_0>0$ and $c_4>0$ such that for all $t\ge t_0$ and  $|\theta|\le 1/2$,
\begin{align*}
\Eb[\exp(\theta (2M-t))] \le c_4 e^{-\theta t} e^{t (e^\theta -1) -\theta/2}.
\end{align*}
For $|\theta|\le 1/2$, we have $e^\theta-1-\theta\le \theta^2$, hence there is $c_5>0$ such that for all $t\ge t_0$
\begin{align*}
\Eb[\exp(\theta (2M-t))] \le c_5 e^{ t \theta^2}.
\end{align*}
Fix $0\le u\le \sqrt{t}$. It follows that for $\theta \in [0, 1/2]$ and $t\ge t_0$
\begin{align*}
\Pb(2M-t > u\sqrt{t}) \le e^{-\theta u \sqrt{t}} \Eb[e^{\theta(2M-t)}] \le c_5 e^{ t\theta^2 -\theta u\sqrt{t}}.
\end{align*}
We can choose $\theta = \frac{u}{2 \sqrt{t}} \in [0, 1/2]$, so that for $t\ge t_0$
\begin{align}\label{eq:conc1}
\Pb(2M-t > u\sqrt{t}) \le c_5 e^{-u^2/4}.
\end{align}
By adjusting $c_5$, one can relax the condition $t\ge t_0$ to all $t>0$ in \eqref{eq:conc1}.
Similarly, using the inequality $\Pb(t- 2M> u\sqrt{t}) \le e^{-\theta u \sqrt{t}} \Eb[e^{-\theta(2M-t)}]$, we can also get for all $t>0$ and $0\le u\le \sqrt{t}$
\begin{align}\label{eq:conc2}
\Pb(t- 2M> u\sqrt{t}) \le c_5 e^{-u^2/4}.
\end{align}
Combining  \eqref{eq:conc1} and \eqref{eq:conc2}, we obtain \eqref{eq:concentration} with $c_3=2c_5$.

We now make use of the inequality
\begin{align}\label{eq:bound1}
\sup_{0\le s\le 1} \big |S_{st} - \sqrt{t} B_M(s) \big| \le \sup_{0\le s\le 1} \big |S_{st} - \sqrt{2M} B_M(s) \big| +\sup_{0\le s\le 1} \big | (\sqrt{2M} -\sqrt{t}) B_M(s) \big|.
\end{align}
To bound the first term in the RHS of \eqref{eq:bound1}, note that
\begin{align*}
&\Pb \bigg[ \sup_{0\le s\le 1} \big |S_{st} - \sqrt{2M} B_M(s) \big| > rc \log t \bigg]\\
\le& 
\Pb \bigg[ \sup_{0\le s\le 1} \big |S_{st} - \sqrt{2M} B_M(s) \big| > r c\log t; \, M\le t \bigg] + \Pb (M>t).
\end{align*}
By \eqref{eq:B_M}, we deduce that for all $\eta>0$, there exist $c(\eta)>0$ such that for all $t>0$,
\begin{align*}
\Pb \bigg[ \sup_{0\le s\le 1} \big |S_{st} - \sqrt{2M} B_M(s) \big| > c(\eta)\log t; \, M\le t \bigg] < c(\eta) t^{-\eta}.
\end{align*}
On the other hand, applying \eqref{eq:concentration} for $u=\sqrt{t}$, we get $ \Pb (M>t) \le c_3 e^{-t/4}$.
By adjusting the value of $c(\eta)$, we have for all $t>0$
\begin{align}\label{eq:rhs1}
\Pb \bigg[ \sup_{0\le s\le 1} \big |S_{st} - \sqrt{2M} B_M(s) \big| > c(\eta) \log t \bigg] <c(\eta) t^{-\eta}.
\end{align}
To bound the second term in the RHS of \eqref{eq:bound1}, we apply \eqref{eq:concentration} for $u=2\sqrt{\eta} \sqrt{\log t}$. For any $\eta>0$, there is $t_0(\eta)>0$ such that the condition $u\le \sqrt{t}$ holds for $t\ge t_0(\eta)$.
We get 
\begin{align}\label{eq:combine1}
\Pb(|\sqrt{2M} -\sqrt{t}| > 2\sqrt{\eta} \sqrt{\log t})\le \Pb(|2M-t| > 2 \sqrt{\eta t\log t}) \le c_3 t^{-\eta}.
\end{align}
For the Brownian bridge $B_M$, see e.g.\ \cite[(2.2.22)]{MR838963}
\begin{align*}
\Pb\left(\sup_{0\le s\le 1} |B_M(s)| >z \right)=2\sum_{k=1}^\infty (-1)^{k-1} e^{-2k^2 z^2} \le 2e^{-2z^2}.
\end{align*}
Taking $z=2\sqrt{\eta} \sqrt{\log t}$, we get
\begin{align}\label{eq:combine2}
\Pb\left(\sup_{0\le s\le 1} |B_M(s)| > 2\sqrt{\eta}\sqrt{\log t} \right) \le 2t^{-8\eta}.
\end{align}
Combining \eqref{eq:combine1} and \eqref{eq:combine2}, there exists $c_6>0$, such that 
\begin{align}\label{eq:rhs2}
\Pb \left( \sup_{0\le s\le 1} \big | (\sqrt{2M} -\sqrt{t}) B_M(s) \big| > 4\eta \log t\right) \le c_6 t^{- \eta}.
\end{align} 
Combining \eqref{eq:bound1}, \eqref{eq:rhs1} and \eqref{eq:rhs2} completes the proof.
\end{proof}

We are now ready to prove Theorem~\ref{lem:kmt3}.
\begin{proof}[Proof of Theorem~\ref{lem:kmt3}]
Consider a continuous-time random walk bridge $\gamma=(S_t, 0\le t\le t_\gamma) $ on $\Zb^d$.
Let $e_1, \cdots, e_d$ be the $d$ unit vectors in $\Rb^d$. Then $S_t = S_{t/d}^1\, e_1 + \cdots S^d_{t/d} \, e_d$ for all $0\le t\le t_\gamma$, where $S^1_t, \cdots, S^d_t$ are independent continuous-time random walk bridges on $\Zb$.
Theorem~\ref{lem:kmt3} then directly follows from Lemma~\ref{thm:kmt2} applied to $S^1_t, \cdots, S^d_t$.
\end{proof}

\section{Coupling the loop soups}

We now construct couplings between the random walk loop soups and the Brownian loop soups, thereby proving Theorems~\ref{thm1} and~\ref{thm2}.
We mostly follow the framework of \cite{LTF2007}, but make some subtle modifications, following a key observation in Section~\ref{sec:consbl}.

We first treat, in Sections~\ref{sec:prep}, \ref{sec:consbl} and~\ref{sec:pf}, the case of continuous-time random walk loop soups. 
We omit the adjective ``continuous-time'' in our description of random walk loop soups in these sections.
We then explain, in Section~\ref{subsec:dtrwls}, how to adapt this proof to the case of discrete-time random walk loop soups. 

\subsection{Preparation}\label{sec:prep}
In this subsection, we define some auxiliary random processes and loops, and construct a family of the random walk loop soups $\{\wt \Ac_\lambda\}_{\lambda>0}$, all on the same probability space $(\Omega, \Fc, \Pb)$.

We will construct the random walk loops with time length at least $2$, and then add independently an increasing family of random walk loop soups $\{\wt\Lc_\lambda\}_{\lambda>0}$ of loops with time length less than $2$.
For integers $n\ge1$ and $z\in \Zb^d$, let $N(n,z;t)$ be independent Poisson processes (parametrized by $t$) with parameter $Q_d(n)$ given by \eqref{eq:qdn}. 
For each $n\ge 1$, $m\ge 1, z\in \Zb^d$ we define the independent random variables $\wt T(n,z;m)$ with density
\begin{align*}
Q_d(n)^{-1} q_d(t) \mathbf{1}_{2n\le t <2n+2} dt.
\end{align*}
Let $\wt \ell(n,z;m)$ be a random walk bridge (loop) on $\Zb^d$ with time length $\wt T(n,z;m)$.
We make the loops $\wt \ell(n,z;m)$ for different $n,z;m$ independent from each other.
Then the collection of loops
\begin{align*}
\wt\Ac_\lambda:=\wt\Lc_\lambda \cup \big\{\wt\ell(n,z,m)+z : n\ge 1, z\in \Zb^d, 1\le m \le N(n, z; \lambda) \big\}
\end{align*}
is a random walk loop soup with intensity $\lambda$. It is increasing in $\lambda$ by construction.

Due to Theorem~\ref{lem:kmt3}, we can also construct on the same probability space $(\Omega, \Fc, \Pb)$,  i.i.d.\ Brownian loops $\ell(n,z;m), n\ge 1, z\in \Zb^d, m\ge 1$, distributed as $d$-dimensional Brownian bridges started at $0$ with time length $1$, so that the following holds. By Theorem~\ref{lem:kmt3}, there exists a constant $c(\eta, d)>0$,  such that for all $n\ge 1, z\in \Zb^d, m\ge 1$,
\begin{align}\label{eq:couple}
\Pb\bigg[\sup_{0\le s\le 1} \big | \wt \ell(n,z;m)(s \wt T) -  \big(\wt T/d\big)^{1/2}\ell(n,z; m) (s) \big| \ge c(\eta, d)   \log n \bigg] \le  c(\eta, d) n^{-\eta},
\end{align}
where we write $\wt T=\wt T(n,z;m)$ for brevity.

\subsection{Construction of the Brownian loop soup}\label{sec:consbl}
We now construct a Brownian loop soup  on the same probability space $(\Omega, \Fc, \Pb)$, using the random processes and loops defined in the previous subsection.
The main difference with the proof in \cite{LTF2007} is that we introduce an increasing sequence $\{a_n\}_{n\ge 1}$, defined by the following recurrence relation
\begin{align}\label{eq:an}
a_1:=(2\pi)^{-1}  \bigg ( (d/2) \int_2^\infty q_d(t) dt \bigg)^{-2/d}, \quad a_n^{-d/2} - a_{n+1}^{-d/2}= (2\pi)^{d/2}  (d/2)Q_d(n).
\end{align}
Note that $q_d(t)$ is integrable at $\infty$ by \eqref{eqnd}, so $a_1>0$ and the sequence is well defined.
This definition ensures that (recall that $(2\pi)^{-d/2} s^{-d/2-1}ds$ is the density of the time length of the Brownian loops in \eqref{eq:bl})
\begin{align}\label{eq:anint}
Q_d(n) =\int_{a_n}^{a_{n+1}} \frac{ds}{(2\pi)^{d/2} s^{d/2+1}}, 
\end{align}
hence we can couple the random walk loops with length in $(2n, 2n+2)$ rooted at $z$, and the Brownian loops with length in $(a_n, a_{n+1})$ rooted in a unit square centered at $z$, in an exact one-to-one correspondence (i.e.\ there are the same number of loops in each set). 

In order to make our coupling work, we need to first show the following estimate. 

\begin{lemma}\label{lem:key}
For the sequence $\{a_n\}_{n\ge 1}$ defined in \eqref{eq:an}, we have
\[a_n =2n/d  +O(1).\]
\end{lemma}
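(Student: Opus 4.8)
The plan is to turn the recurrence \eqref{eq:an} into an asymptotic statement by summing it. Set $b_n := a_n^{-d/2}$. Then \eqref{eq:an} says $b_n - b_{n+1} = (2\pi)^{d/2}(d/2) Q_d(n)$, so for any $n$,
\[
b_n = \sum_{k\ge n} (2\pi)^{d/2}(d/2) Q_d(k) = (2\pi)^{d/2}(d/2)\int_{2n}^\infty q_d(t)\,dt,
\]
using the definition \eqref{eq:qdn} of $Q_d(n)$ as $\int_{2n}^{2n+2} q_d$ and telescoping (the boundary term vanishes since $q_d$ is integrable at $\infty$ by \eqref{eqnd}); one checks this is consistent with the stated value of $a_1$. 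So the whole problem reduces to estimating the tail integral $\int_{2n}^\infty q_d(t)\,dt$.

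Next I would plug in the Taylor expansion \eqref{eqnd}, $q_d(t) = (d/2)^{d/2}\pi^{-d/2} t^{-d/2-1} + O(t^{-d/2-2})$, valid for $t$ bounded away from $0$. Integrating from $2n$ to $\infty$ gives
\[
\int_{2n}^\infty q_d(t)\,dt = (d/2)^{d/2}\pi^{-d/2}\cdot \frac{(2n)^{-d/2}}{d/2} + O\big((2n)^{-d/2-1}\big) = (d/2)^{d/2-1}\pi^{-d/2}(2n)^{-d/2}\big(1 + O(n^{-1})\big).
\]
Substituting into the formula for $b_n$: the constant $(2\pi)^{d/2}(d/2)$ times $(d/2)^{d/2-1}\pi^{-d/2}(2n)^{-d/2}$ equals $2^{d/2}(d/2)^{d/2}(2n)^{-d/2} = (d/2)^{d/2} n^{-d/2} = (2n/d)^{-d/2}$. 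Hence $b_n = (2n/d)^{-d/2}(1 + O(n^{-1}))$, and therefore $a_n = b_n^{-2/d} = (2n/d)(1+O(n^{-1}))^{-2/d} = 2n/d + O(1)$, which is exactly the claim, and it also makes the quantitative bound $|a_n - 2n/d| \le c(d)$ of \eqref{eq:c'} transparent (the $O(1)$ is genuinely bounded, not growing).

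I should also record why the sequence is increasing, as asserted right before the lemma: since each increment $b_n - b_{n+1} = (2\pi)^{d/2}(d/2)Q_d(n) > 0$, the sequence $b_n$ is strictly decreasing and positive, so $a_n = b_n^{-2/d}$ is strictly increasing; and $a_1 > 0$ because $\int_2^\infty q_d < \infty$. The main (and really only) obstacle is the uniformity of the error term in \eqref{eqnd}: one must make sure the $O(t^{-d/2-2})$ in the Taylor expansion of $q_d(t)$ holds uniformly for all $t \ge 2$ (which it does, being a statement about the transition kernel $p_t(0,0)$ via \eqref{eq:pt0} and \eqref{eq:qdt_def}), so that term-by-term integration of the error over $[2n,\infty)$ is legitimate and produces a genuinely bounded correction. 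Everything else is the telescoping identity and an elementary estimate of a tail integral, plus the observation that $(1+O(1/n))^{-2/d} = 1 + O(1/n)$.
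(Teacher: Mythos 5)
Your proof is correct and follows essentially the same route as the paper's: both sum the telescoping recurrence for $a_n^{-d/2}$, use the choice of $a_1$ to identify $a_n^{-d/2}$ with $(2\pi)^{d/2}(d/2)\int_{2n}^\infty q_d(t)\,dt$, estimate that tail integral via the expansion \eqref{eqnd}, and invert to get $a_n = (2n/d)(1+O(n^{-1}))$. The only presentational difference is that the paper takes $m\to\infty$ in the finite telescoping identity \eqref{eq:anm}, whereas you write the tail sum directly and note the boundary term vanishes; the substance is identical.
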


\begin{proof}
By \eqref{eq:an}, for all $n\ge 2$,
\begin{align*}
a_n^{-d/2} = a_1^{-d/2} - \sum_{k=1}^{n-1} (2\pi)^{d/2} (d/2)Q_d(k),
\end{align*}
which tends to $0$ as $n\to \infty$, by our choice of $a_1$. This implies $a_n\to \infty$ as $n\to \infty$.

We also have for $m\ge n\ge 1$,
\begin{align}\label{eq:anm}
a_n^{-d/2} - a_m^{-d/2} =\sum_{k=n}^{m-1} (2\pi)^{d/2} (d/2)Q_d(k) =  (2\pi)^{d/2} (d/2) \int_{2n}^{2m} q_d(t) dt.
\end{align}
Letting $m\to \infty$, the left hand-side of \eqref{eq:anm} tends to $a_n^{-d/2}$, and the right hand-side of \eqref{eq:anm} tends to $ d^{d/2}(2n)^{-d/2} (1+O(n^{-1}))$ by \eqref{eqnd}. 
This leads to
\begin{align*}
a_n^{-d/2} =   d^{d/2}(2n)^{-d/2} (1+O(n^{-1})).
\end{align*}
It then implies
$a_n =(2n/d)(1+O(n^{-1}))$, which completes the proof.
\end{proof}

We now construct the Brownian loop soup on the same probability space $(\Omega, \Fc, \Pb)$  where the random walk loop soup in Section~\ref{sec:prep} lives.
We will only construct the loops with time length at least $a_1$, and then add independently an increasing family of Brownian loop soups $\{\Lc_\lambda\}_{\lambda>0}$ of loops with time length less than $a_1$. These  microscopic loops will not be coupled with the random walk loops.

Recall the Poisson processes $N(n, z;t)$ and the coupled loops $\wt\ell(n, z; m),  \ell(n,z;m)$ defined in Section~\ref{sec:prep}.
We further define independent  $\Rb^d$-valued random variables $Y(n,z;m)$ uniformly distributed on the hypercube $ \big\{\sum_{j=1}^d x_j e_j, |x_j| \le 1/2 \big\}$, and independent real random variables $T(n,z;m)$ with density
\begin{align}\label{eq:time}
(2\pi)^{-d/2} Q_d(n)^{-1} s^{-d/2-1} \mathbf{1}_{ a_n \le s\le a_{n+1}}.
\end{align}
Indeed, \eqref{eq:time} is the density of a probability measure by \eqref{eq:anint}.
We construct the rooted Brownian loop soup as follows
\begin{itemize}
\item Let $N(n, z;t)$ be the number of rooted loops that have appeared by time $t$ whose root is in the unit hypercube centered at $z$ and whose time duration is between $a_n$ and $a_{n+1}$;
\item Let $\ell^*(n,z; m)$ be the bridge obtained by scaling and translating $\ell(n,z,m)$, so that it has time duration $T(n,z; m)$ and root $z+Y(n,z; m)$.
\end{itemize}
Then the collection of loops
\begin{align*}
\Ac_\lambda:=\Lc_\lambda \cup \{\ell^*(n,z,m) : n\ge 1, z\in \Zb^d, 1\le m \le N(n, z; \lambda)\}
\end{align*}
is a Brownian loop soup with intensity $\lambda$. It is increasing in $\lambda$ by construction.

\subsection{Proof of Theorem~\ref{thm1}}\label{sec:pf}
We will show that the loop soups $\{\wt \Ac_\lambda \}_{\lambda>0}$ and $\{\Ac_\lambda \}_{\lambda>0}$ constructed in the previous subsections satisfy the conditions of Theorem~\ref{thm1}. 
Compared with Theorem~\ref{thm0}, we will make one more (small) improvement.
We obtain $c\lambda r^d$ as coefficient in the error probability which tends to $0$ as $\lambda \to 0$ (compared with the coefficient $c(\lambda+1) r^2$ in Theorem~\ref{thm0}).

\begin{proof}[Proof of Theorem~\ref{thm1}]
We fix $a>0$ and $\theta \in (0,2)$. The constants $c_0,c_1,\ldots$ that we will use in this proof are allowed to depend on $a,\theta, d$, but not on $r, \lambda, N$.
Fix 
\begin{equation}\label{eq:k}
k= 2+2a/d.
\end{equation}
For $r\ge 1$, $\lambda>0$ and integer $N\ge 1$, let
\begin{align*}
Z:=\sum_{|z|\le rN} \sum_{n\ge N^k} N(n,z;\lambda).
\end{align*}
Then $Z$ is a Poisson random variable with 
\begin{align*}
\Eb[Z]=\lambda \sum_{|z|\le rN} \sum_{n\ge N^k} Q_d(n) = \lambda \sum_{|z|\le rN} \int_{2 N^k}^{\infty} q_d(t) dt
\le c_1 \lambda r^d N^d N^{-kd/2},
\end{align*}
where $c_1$ is a constant depending possibly on $a,d$, following \eqref{eqnd}.
This implies
\begin{align}\label{eq:p1}
\Pb\left[N(n,z; \lambda) >0 \text{ for some } n\ge N^k, |z| \le rN \right] \le c_1 \lambda r^d N^{(1-k/2)d} = c_1\lambda r^d N^{-a},
\end{align}
where the last equality is due to \eqref{eq:k}.

Let us denote the loops $\ell^*(n, z;m)$ and $\wt \ell(n,z;m)+z$ by $\gamma_{n,z,m}$ and $\wt \gamma_{n,z,m}$. Recall that they have respectively time length $T(n,z;m)$ and $\wt T(n,z;m)$, which we denote by $T ,\wt T$ for brevity.
Fix
\begin{align}\label{eq:eta}
\eta =(a+ d)/\theta. 
\end{align}
Recall the constant $c(\eta,d)$ in \eqref{eq:couple}.
We define the following event
\begin{align*}
A:=\bigg\{ \sup_{0\le s\le 1} &|\gamma_{n,z,m}(sT) - \wt \gamma_{n,z,m}(s \wt T)| \ge 3k \,c(\eta,d) \log N\\
 &\text{ for some } |z| <rN,  N^{\theta} < n < N^k, 1\le m\le N(n,z;\lambda) \bigg\}.
\end{align*}
Since
\begin{align*}
|\gamma_{n,z,m}(s T) - \wt \gamma_{n,z,m}(s\wt T)| 
\le &\big| (\wt \gamma_{n,z,m}(s \wt T)-z) - (\wt T/d)^{1/2} T^{-1/2} (\gamma_{n,z,m}(s T)-z - Y(n,z;m)) \big|\\
&+ \big| \big( (\wt T/d)^{1/2} T^{-1/2}-1\big) \big(\gamma_{n,z,m}(sT) -z  - Y(n,z;m) \big) \big|\\
&+ |Y(n,z;m)|,
\end{align*}
noting that $| Y(n,z;m)|\le 1$, we have 
\begin{align*}
A\subseteq\left\{A^1_{n,z,m} \cup A^2_{n,z,m} \text{ for some } |z| <rN,  N^{\theta} < n < N^k, 1\le m\le N(n,z;\lambda) \right\},
\end{align*}
where
\begin{align*}
&A^1_{n,z,m}=\bigg\{\sup_{0\le s\le 1} \big|  (\wt \gamma_{n,z,m}(s\wt T) -z)  -(\wt T/ d)^{1/2}T^{-1/2} (\gamma_{n,z,m}(s T)- Y(n,z;m)) \big| \ge c(\eta,d)  \log N^k \bigg\}, \\
&A^2_{n,z,m}=\bigg\{ \big( (\wt T/d)^{1/2} T^{-1/2}- 1 \big) \sup_{0\le s\le 1}|  \gamma_{n,z,m}(s T) - Y(n,z;m))| \ge c(\eta,d) \log N^k\bigg \}.
\end{align*}
Note that the rescaled loop $T^{-1/2} (\gamma_{n,z,m}(s T)- Y(n,z;m)), 0\le s\le 1$ is by definition $\ell(n,z;m)$, which is a Brownian bridge of time length $1$.
By \eqref{eq:couple}, for all $N^\theta < n < N^k$, there exists a constant $c_0$ depending only on $a, \theta, d$ (recall that $\eta$ is a function of $a,\theta$, according to \eqref{eq:eta}), such that 
\begin{equation}\label{eq:a2}
\Pb(A^1_{n,z,m}) \le c_0 n^{-\eta} \le c_0 N^{-\theta\eta} = c_0 N^{-a-d}.
\end{equation}
On the other hand, since $2n\le  \wt T<2n+2$,
$a_n \le T \le a_{n+1}$, and $a_n =2n/d +O(1)$ by Lemma~\ref{lem:key}, we have
\begin{align*}
\big|(\wt T/d)^{1/2}T^{-1/2} - 1\big| =O( n^{-1}).
\end{align*}
Therefore, there exist $c_3, c_4, c_5>0$, such that  for all $ N^\theta < n < N^k$,
\begin{align}\label{eq:a1}
\Pb(A^2_{n,z,m})\le \Pb\bigg [\sup_{0\le s\le 1}  | \gamma_{n,z,m}(s T) - Y(n,z;m)| \ge c_3 n \log n \bigg] \le \exp(- c_4n) \le c_5 N^{-a-d},
\end{align}
where the second inequality follows from the fact that $ \gamma_{n,z,m}(s T) - Y(n,z;m)$ is a 
$d$-dimensional Brownian bridge with time length $T\sim 2n$, and that  the maximum $M$ of a one-dimensional Brownian bridge with time length $1$ satisfies $\Pb(M > x) =\exp(-2x^2)$.

Now, let
\begin{align*}
W:=\sum_{|z|\le rN} \sum_{N^\theta<n< N^k} N(n,z;\lambda).
\end{align*}
Then  $W$ is a Poisson random variable with 
\begin{align*}
\Eb[W]\le \lambda \sum_{|z|\le rN} \sum_{n\ge1} Q_d(n) = c_6(d) \lambda r^d N^{d},
\end{align*}
where $c_6(d) = \sum_{n\ge 1} Q_d(n)<\infty$ depends only on $d$.
Let $\Fc_0$ be the $\sigma$-algebra generated by $N(n,z;t)$ for all $n\ge 1, z\in\Zb^d, t>0$. Note that $W$ is measurable with respect to $\Fc_0$.

Combining \eqref{eq:a2} and \eqref{eq:a1}, we get
\begin{align*}
\Pb(A \mid \Fc_0) \le W (c_0+c_5) N^{-a-d},
\end{align*}
so there exists $c_7>0$ such that 
\begin{align}\label{eq:pa}
\Pb(A)\le \Eb(W) (c_0+c_5) N^{-a-d}  \le c_7\lambda r^d N^{-a}.
\end{align}
On the event $A^c \cap \{N(n,z; \lambda) =0 \text{ for all } n\ge N^k, |z| \le rN\}$ (whose probability is bounded by the sum of \eqref{eq:p1} and \eqref{eq:pa}), the coupling satisfies the conditions of Theorem~\ref{thm1}, with $c=\max \{c_1+c_7, \, 3k c(\eta, d)\}$. Indeed, \eqref{c2} is ensured by $A$, and \eqref{c1} is due to  \eqref{eq:c'}.
\end{proof}

\subsection{Proof of Theorem~\ref{thm2}}\label{subsec:dtrwls}
The proof for discrete-time random walk loop soups is similar, so we will only outline the differences. 
For simplicity, we keep the same notations as for the continuous-time random walk loop soups, but update some of their definitions.

First, we do not need to add the loop soups $\{\wt\Lc_\lambda\}_{\lambda>0}$ as in Section~\ref{sec:prep}, since there are by definition no loop with time length less than $2$ in the discrete-time random walk loop soup. For integers $n\ge1$ and $z\in \Zb^d$, we similarly define independent Poisson processes $N(n,z;t)$, but with parameter $\wt q_d(n)$ given by \eqref{eq:qn1}, instead of $Q_d(n)$.
For each $n\ge 1$, $m\ge 1, z\in \Zb^d$, let $\wt \ell(n,z;m)$ be an independent random walk bridge (loop) on $\Zb^d$ with time length $2n$.
Then the collection of loops
\begin{align*}
\wt\Ac_\lambda:= \big\{\wt\ell(n,z,m)+z : n\ge 1, z\in \Zb^d, 1\le m \le N(n, z; \lambda) \big\}
\end{align*}
is a random walk loop soup with intensity $\lambda$. It is increasing in $\lambda$ by construction.

For $d=1,2$, due to Theorem~\ref{thm:kmt1} and Lemma~\ref{lem:kmt}, we can also construct on the same probability space $(\Omega, \Fc, \Pb)$,  i.i.d.\ Brownian loops $\ell(n,z;m), n\ge 1, z\in \Zb^d, m\ge 1$, distributed as $d$-dimensional Brownian bridges started at $0$ with time length $1$, so that the following holds. 
\begin{align}\label{eq:couple-12}
\Pb\bigg[\sup_{0\le s\le 1} \big | \wt \ell(n,z;m) (2ns) -  (2n/d)^{1/2}\ell(n,z,m)(s) \big| \ge c(\eta, d)   \log n \bigg] \le  c(\eta, d) n^{-\eta}.
\end{align}
For $d\ge 3$, by Lemma~\ref{lem:kmtd}, we get a slightly weaker bound, namely there exist $c(\eta, d)$,  such that for all $n\ge 1, z\in \Zb^d, m\ge 1$,
\begin{align}\label{eq:couple-d}
\Pb\bigg[\sup_{0\le s\le 1} \big | \wt \ell(n,z;m) (2ns) -  (2n/d)^{1/2}\ell(n,z,m)(s) \big| \ge c(\eta, d) n^{1/4}  \log n \bigg] \le  c(\eta, d) n^{-\eta}.
\end{align}
The bounds \eqref{eq:couple-12} and \eqref{eq:couple-d} are the analogues of \eqref{eq:couple}.

We define the sequence $\{a_n\}_{n\ge 1}$ by the following recurrence relation
\begin{align}\label{eq:an12}
a_1:=(2\pi)^{-1} \bigg((d/2) \sum_{n=1}^\infty \wt q_d(n) \bigg)^{-2/d}, \quad a_n^{-d/2} - a_{n+1}^{-d/2}= (2\pi)^{d/2} (d/2) \wt q_d(n).
\end{align}
We can then similarly prove that $a_n = 2n/d +O(1).$
To construct the Brownian loop soup, we proceed exactly as in Section~\ref{sec:consbl}, but replace $Q_d(n)$ by $\wt q_d(n)$ in \eqref{eq:time}. 

For $d=1,2$, the rest of the proof is exactly the same as in Section~\ref{sec:pf}.
For $d\ge 3$, we still need to update the definitions of the events $A$, $A^1_{n,z,m}$ and $A^2_{n,z,m}$ to
\begin{equation}\label{eq:A}
\begin{split}
A:=\bigg\{ \sup_{0\le s\le 1} &|\gamma_{n,z,m}(sT) - \wt \gamma_{n,z,m}(s \wt T)| \ge 3k \,c(\eta,d) N^{k/4} \log N\\
 &\text{ for some } |z| <rN,  N^{\theta} < n < N^k, 1\le m\le N(n,z;\lambda) \bigg\}
 \end{split}
\end{equation}
and
\begin{align*}
&A^1_{n,z,m}=\bigg\{\sup_{0\le s\le 1} \big|  (\wt \gamma_{n,z,m}(s\wt T) -z)  -(\wt T/ d)^{1/2}T^{-1/2} (\gamma_{n,z,m}(s T) -z - Y(n,z;m)) \big| \ge c(\eta,d) N^{k/4} \log N^k \bigg\}, \\
&A^2_{n,z,m}=\bigg\{ \big( (\wt T/d)^{1/2} T^{-1/2}- 1 \big) \sup_{0\le s\le 1}|  \gamma_{n,z,m}(s T) -z - Y(n,z;m)| \ge c(\eta,d)N^{k/4} \log N^k\bigg \}.
\end{align*}
By \eqref{eq:couple-d}, we deduce that for all $N^\theta < n < N^k$, there exists a constant $c_0$ depending only on $a, \theta, d$, such that $\Pb(A^1_{n,z,m}) \le c_0 N^{-a-d}$. We get a similar bound for $\Pb(A^2_{n,z,m})$, as in \eqref{eq:a1}. 
The definition \eqref{eq:A} of $A$ implies the bound \eqref{c33} for the  loops which are rescaled by $N^{-1}$ in space, noting that $k=2+2a/d$ by \eqref{eq:k}.

\subsection*{Acknowledgments}
We thank Yifan Gao, Xinyi Li and Pierre Nolin for useful discussions.
We thank the anonymous referees for their careful reading and for pointing out a citation error related to the KMT coupling in an earlier version of the manuscript.
We also acknowledge the support by a GRF grant from the Research Grants Council of the Hong Kong SAR (project 11305825).

\bibliographystyle{abbrv}
\bibliography{cr}

\end{document}